\makeatletter \@addtoreset{equation}{section}
\newcommand{\disp}{\displaystyle}
\newcommand{\beq}{\begin{equation}}
\newcommand{\eeq}{\end{equation}}
\newcommand{\bed}{\begin{displaymath}}
\newcommand{\eed}{\end{displaymath}}
\newcommand{\ben}{\begin{eqnarray*}}
\newcommand{\een}{\end{eqnarray*}}
\newcommand{\bedd}{\bed\begin{array}{l}}
\newcommand{\eedd}{\end{array}\eed}
\newcommand{\bea}{\bed\begin{array}{rl}}
\newcommand{\eea}{\end{array}\eed}
\newcommand{\ad}{&\!\!\!\disp}
\newcommand{\aad}{&\disp}
\newcommand{\barray}{\begin{array}{ll}}
\newcommand{\earray}{\end{array}}
\newcommand{\lbar}{\overline}
\def\({\left(}
\def\){\right)}
\def\tr{\hbox{\rm tr}}
\def\one{{\hbox{\rm 1{\kern -0.35em}1}}}
\newcommand{\F}{{\mathfrak F}}
\newcommand{\op}{{\mathcal L}}
\def\cd{(\cdot)}
\newcommand{\M}{{\mathcal M}}
\newcommand{\rr}{{\mathbb R}}
\def\lan{\big\langle}
\def\ran{\big\rangle}
\renewcommand{\SS}{{\mathcal S}}
\newcommand{\la}{\lambda}
\newcommand{\vphi}{\varphi}
\newcommand{\e}{\varepsilon}
\newcommand{\al}{\alpha}
\newcommand{\ga}{\gamma}
\newcommand{\sg}{\sigma}
\newcommand{\tha}{\theta}
\newcommand{\pr}{{\mathbb P}}
\newcommand{\ex}{{\mathbb E}}
\newcommand{\A}{{\mathcal A}}
\newcommand{\Ax}{{\mathcal A}_{x,\alpha}}
\newcommand{\wdt}{\widetilde}
\newcommand{\tu}{{\tilde u}}
\newcommand{\tv}{{\tilde v}}
\newcommand{\V}{\mathbf V}
\newcommand{\xal}{\ensuremath{x,\alpha}}
\newcommand{\xalz}{\ensuremath{x_0,\alpha_0}}
\newcommand{\xz}{\ensuremath{x_0}}
\newcommand{\tvphi}{\tilde \varphi}
\newcommand{\Dj}{D_{x_j}}
\newcommand{\Dij}{D_{x_ix_j}}
\newcommand{\xe}{x_{\varepsilon}}
\newcommand{\ye}{y_{\varepsilon}}
\newcommand{\set}[1]{\left\{#1\right\}}
\newcommand{\abs}[1]{\left\vert #1\right\vert}
\newtheorem{thm}{Theorem}[section]
\newtheorem{prop}[thm]{Proposition}
\newtheorem{lem}[thm]{Lemma}
\theoremstyle{definition}
\newtheorem{rem}[thm]{Remark}
\newtheorem{defn}[thm]{Definition}
\newtheorem{exm}[thm]{Example}
\newcommand{\thmref}[1]{Theorem~{\rm \ref{#1}}}
\newcommand{\lemref}[1]{Lemma~{\rm \ref{#1}}}
\title{On Singular Control Problems with State Constraints and  Regime-Switching: A Viscosity Solution Approach}
\author{Qingshuo Song\thanks{Department of Mathematics, City
    University of Hong Kong, 83 Tat Chee Avenue, Kowloon Tong, Hong
    Kong, {\tt   song.qingshuo@cityu.edu.hk}. 
    } \and Chao Zhu\thanks{Department of Mathematical Sciences, University of Wisconsin-Milwaukee, Milwaukee, WI 53201, USA, {\tt zhu@uwm.edu}. 
    }}
\begin{document}

\maketitle

\begin{abstract}
This paper investigates a singular stochastic control problem for a multi-dimensional regime-switching diffusion process confined in  an unbounded domain. The objective is to maximize the total expected discounted rewards from exerting the singular control. Such a formulation stems from application areas such as optimal harvesting multiple species and optimal dividends payments schemes in random environments. With the aid of weak dynamic programming principle, we characterize the value function to be the unique constrained viscosity solution of a certain system of coupled nonlinear quasi-variational inequalities. Several examples are analyzed in details to demonstrate the main results. 

\bigskip
{\bf Key words.} constrained viscosity solution, regime-switching diffusion, singular stochastic control, weak dynamic programming principle, quasi-variational inequality. 

\bigskip
{\bf AMS subject classification.}  93E20, 60J60
\end{abstract}

\setlength{\baselineskip}{0.24in}
\section{Introduction}\label{sect-intro}
This paper is concerned with a class of singular stochastic control problems with state constraints. 
The controlled regime-switching diffusion process $X$ and the singular control process $Z$ take values in a convex cone $S\subset \rr^n$.  The control problem has the  state process
\begin{displaymath}
X(t)= x + \int_0^t b(X(s),\al(s))ds + \int_0^t \sigma(X(s),\al(s))dW(s) -Z(t),
\end{displaymath} 
where $W$ is a $d$-dimensional standard Brownian motion, $\al$ is a continuous-time Markov chain with a finite state space $\M=\set{1,\dots, m}$,  $Z=(Z_1,\dots, Z_n)'$ is an $n$-dimensional adapted, nondecreasing, and c\`adl\`ag stochastic process, and $b,\sigma$ are appropriate measurable  functions. 
 The income rates $f_i$, $i=1,\dots,n$, from exerting the singular control are allowed to be state- and regime-dependent.
 The objective is to maximize the total discounted reward
 \begin{equation}\label{eq1-sec1}
\ex \left[\int_0^\infty e^{-rs}\sum_{i=1}^n f_i(X(s),\al(s)) dZ_i(s)\right],
\end{equation}
where $r>0$ is the discounting factor.

Such  singular control problems (in various different settings) have been extensively studied in the literature. A partial list includes 
the monotone follower problems (\cite{Karatzas-S-84}),
  optimal harvesting problems (\cite{A-Shepp,Song-S-Z}),  optimal dividend distribution schemes (\cite{Paulsen-03}),
portfolio selection management with transaction costs (\cite{OS02}),
   optimal partially reversible investment problem (\cite{MR2132595}),  
 and  heavy traffic modeling and control problems (\cite{Wein90,MR2832412}), etc. See also \cite{Hauss-S-I,Hauss-S} for  a general singular stochastic control problem for a multidimensional It\^o diffusion on a fixed time horizon,
in which
the existence of the optimal control and the characterization of the value function as the unique viscosity solution of a Hamilton-Jacobi-Bellman equation are established.  
Singular control problems with state constraints have drawn considerable interests in recent years; see, for example,  \cite{MR2271486, MR2358640,Zar92}, among others. 

Note that most, if not all, of the aforementioned literature on singular stochastic controls deal with It\^o (jump) diffusions. One exception is our recent work \cite{Song-S-Z}, which studies an optimal harvesting problem of a single species living in random environments. 
Due to their capability of modeling complex systems with uncertainty, regime-switching models have drawn considerable attention from both researchers and practitioners in recent decades in a wide range of applications. 
Some of such examples can be found in  mathematical finance (\cite{Zhang}), ecosystem modeling (\cite{Slatkin-78,ZY-09a}),   stochastic manufacturing systems (\cite{SethiZ}), risk management (\cite{Elliott-Siu-10,Zhu-10}), to name just a few. In these systems, both continuous dynamics and discrete events coexist. In particular, the systems often display 
qualitative structural changes.
Regime-switching models turn out to be quite versatile in capturing these inherent randomness. We refer to \cite{MaoY} and \cite{YZ-10} for in-depth investigations of regime-switching diffusions.

	This work aims to investigate the singular control problem \eqref{eq1-sec1} in the setting of multi-dimensional regime-switching diffusion with state constraints.  First we recall the notion of constrained viscosity solution, illustrated by several simple yet nontrivial examples. Then we use the weak dynamic programming principle to show that the value function defined in \eqref{value} is a constrained viscosity solution to the coupled system of quasi-variational inequalities \eqref{qvi-equiv-form} in Theorem \ref{thm-viscosity}.
	Finally, we derive a strong comparison result  in Theorem \ref{thm-comparison}, from which we establish the uniqueness of the constrained viscosity solution to \eqref{qvi-equiv-form}.  
Compared with the classical work on viscosity solution such as  \cite{CIL92,YongZ} and others, the novelty and contribution of this work can be summarized as follows. In lieu of a single differential equation studied in the literature, this work deals with a coupled system of nonlinear second-order differential equations with gradient constraints. The coupling effect is due to the presence of 
random environments or regime switching. This feature at one hand makes our model more appealing in real-world  applications since it can naturally capture the qualitative structural changes of the systems;  on the other hand, it adds much difficulty in the analysis. In particular, the function $F$ defined in \ref{F-defn} is not proper with respect to the variable  $\xi$ in the sense of the User's Guide \cite{CIL92}. Note that the properness was an essential assumption in the proof of strong comparison result in \cite{CIL92}. Here we need to carefully handle the coupling effect; see the proof of Theorem \eqref{thm-comparison} for more details.  
Another  noteworthy feature of this work is that we introduce an exponential transformation which allows us to handle both the gradient constraints as well as the polynomial growth condition on an unbounded domain for the solution of the coupled system of quasi-variational inequalities
 \eqref{qvi-equiv-form}.

The rest of the paper is arranged as follows. Section \ref{sect-formulation} presents the precise formulation of the problem, followed by some preliminary results in Section \ref{sect-DPP}.  We recall the notion of constrained viscosity solution in Section \ref{sect-viscosity}, followed by several examples for illustration.  Further,  in Section \ref{sect-viscosity},  we establish the existence by showing that the value function $V$ defined in \eqref{value} is a constrained viscosity solution of \eqref{qvi-equiv-form}. The strong comparison result is arranged in Section \ref{sect-strong-comparison}. A hierarchical PDE characterization of the boundary behavior is arranged in   Section \ref{sect-strong-comparison} as well.  The paper is concluded with conclusions and remarks in Section \ref{sect-remarks}.

To facilitate later presentation, we introduce some   notations that will be used often in later sections.  We say that a function from $[0,\infty)$ to some Polish space $E$ is c\`adl\`ag if it is right continuous and has
 left limits in $E$ on $[0,\infty)$. When $E=\rr^n $ and $\xi $ is c\`adl\`ag, then we write $\Delta \xi(t)= \xi(t)-\xi(t-)$ for $t> 0$.
 As a convention, we set 
  $\Delta \xi(0)= \xi(0)$.
  Throughout the paper, we use $x'y$ or $x\cdot y$ interchangablly to denote the inner product of vectors $x$ and $y$.
 For any vectors $x,y\in \rr^n$,  $x\le  y$ means $x_i \le y_i$ for every $i=1,2,\dots, n$.
  The space of $n\times n$ symmetric matrices is denoted by $\mathcal S_n$
 and the family of positive definite symmetric matrices is denoted by $\mathcal S_n^+$. If $A,B \in \mathcal S_n$ and $A-B \in \mathcal S_n^+$, then we write
 $A > B$.
   If $\phi: \rr^n \to \rr$ is   sufficiently smooth, 
then    $D_{x_i} \phi= \frac{\partial \phi}{\partial x_i}$, $D_{x_ix_j} \phi= \frac{\partial^2 \phi}{\partial x_i\partial x_j}$,
     and $D\phi   =(D_{x_1}\phi, \dots, D_{x_n}\phi)'$  is the gradient of $\phi$
 while $D^2\phi =(D_{x_ix_j}\phi)$ denotes the Heissian of $\phi$. 
 For any real-valued  function $f$, we use $f_*$ and $f^*$ to denote the lower- and upper-semicontinuous envelopes of $f$, respectively. 
 If $B $ is a set, we use $B^o$ and $I_B$ to denote the interior and indicator function of $B$, respectively. Throughout the paper, we adopt the conventions that $\sup \emptyset =-\infty$
 and $\inf \emptyset = + \infty$. 

\section{Formulation}\label{sect-formulation}
We consider singular control problems for a regime-switching diffusion
 \beq\label{sde-switching} 
 d\zeta  (t)= b(\zeta (t),\al(t)) dt + \sigma(\zeta (t),\al(t))dW(t), \ \ \zeta (0)=x,\  \al(0)=\al,
 \eeq
 where $x\in \rr^n, \al \in \M=\set{1, \dots, m}$,  $W$ is a $d$-dimensional standard Brownian motion, $b: \rr^n \times \M  \mapsto \rr^n$, $\sigma: \rr^n \times \M  \mapsto  \rr^{n\times d}$,
and  $\al\cd \in \M$ is a continuous-time Markov chain that is independent of the Brownian motion $W$ and is generated by
   $Q=(q_{ij})\in \rr^{m \times m}$:
     \begin{equation}\label{Q-gen}\pr\set{\al(t+ \Delta t)=j|
\al(t)=i,\al(s),s\le t}=\begin{cases}q_{ij}
\Delta t + o(\Delta t),\ &\hbox{ if }\ j\not= i\\
1+ q_{ii}\Delta t + o(\Delta t), \ &\hbox{ if }\ j=i,
\end{cases}   \end{equation}  where $q_{ij}\ge 0$ for $i,j=1,\dots,m$
with $j\not= i$ and $ q_{ii}=-\sum_{j\not= i}q_{ij}<0$ for each $i=1,\dots,m$.

Throughout the paper, we assume that the coefficients $b$ and $\sigma$ and the generator $Q$  are such that 
for any initial condition  $(x,\al) \in \rr^n \times \M$, the solution $\zeta ^{\xal}$ to \eqref{sde-switching} exists and is weakly unique.
Sufficient condition for 
existence and uniqueness for stochastic differential equations with regime switching  can be found in, for example, \cite{MaoY,YZ-10}.

We now introduce singular control into \eqref{sde-switching} with state constraint and suppose that the controlled dynamic is given by
 \begin{equation}\label{dyna-vec}
 dX(t) =b(X(t),\al(t)) dt + \sigma(X(t),\al(t))dW(t) -dZ(t),
 \end{equation}
with initial conditions
 \begin{equation}\label{initial}   X(0-) = x \in S,\ \  \al(0)= \al \in \M,  \end{equation}
where $Z \in \rr^n$ is a singular control process to be specified below.  Without loss of generality, we  take $S=\rr^n_+=\set{x\in \rr^n: x_i >0, i=1,\dots, n}$. 
Note that $X(0)$ may not be
 equal to $X(0-)$ due to an instantaneous push $Z(0)$ at time $0$.
Denote the solution to \eqref{dyna-vec} with initial condition specified by (\ref{initial}) by $  X^{x,\al}\cd$.

   Let $\mathcal A_{\xal}$ denote the collection of all {\em admissible controls} with initial conditions given by \eqref{initial}, where  $Z\in \mathcal A_{x,\al}$ satisfies
 \begin{itemize}
 \item[(i)] for each $i=1,\dots,n$, $Z_i(t)$ is nonnegative,  c\`adl\`ag  and nondecreasing with respect to $t$,
  \item[(ii)]  $X(t) \in S$ for all $t\ge 0$, and

 \item[(iii)] $Z(t)$ is adapted to ${\F}_t:=\sigma\set{W(s),\al(s), 0\le s \le t}$, where $\F_0$ contains all $\pr$-null sets. Moreover,
     $$\ex \int_0^\infty e^{-rs} d\abs{Z}(s)< \infty.$$
 \end{itemize}
	Note that the state constraint is specified in  condition (ii) above.  
Throughout the paper, we assume	 $\mathcal A_{\xal}\not= \emptyset$ for every $(x,\al) \in  S \times \M$; see Section \ref{sect-DPP} for a sufficient condition. 
  For a fixed $Z\in \A_{\xal}$, the discounted payoff is 
  \beq\label{J-defn}J(x,\al,Z) :=\ex \int_0^{\infty} e^{-r s}   f( X^{x,\al}(s-),\al(s-)) \cdot dZ(s),\eeq
where $f: S \times \M \mapsto  {\rr^n}$ with  $f_i$ representing the state- and regime-dependent  instantaneous marginal
yields accrued from exerting the singular control $Z_i(t)$. Assume $f_i$ is
continuous and non-increasing with respect to $x$ in the sense that
$f_i(x,\al) \ge f_i(y,\al)$ for each $\al\in\M$ if $x\le   y$,
where $x=(x_1,\dots,x_n)'$ and $y=(y_1,\dots,y_n)'$ satisfy $x_j \le y_j$ for each $ j=1,\dots,n$. Moreover, we assume $0< f_i(0,\al) < \infty$ for
each $i=1,\dots, n$ and $\al \in \M$.  Such assumptions on $f$ are motivated by considerations in optimal harvesting problems  (\cite{Alvarez} and  \cite{Song-S-Z}).
 The goal is to maximize the expected total discounted payoff  and find an optimal control $Z^*$:
\begin{equation}\label{value}
V(x,\al) = J(x,\al,Z^*) := \sup_{Z\in \mathcal{A}_{\xal}} J(x,\al,Z).
\end{equation}
In order to work with a well-formulated maximization problem, we assume throughout the paper that $V(x,\al)< \infty$ for all $(x,\al)\in \rr_{+}^{n} \times \M$.



As usual, we shall rely on the   dynamic  programming principle  (DPP) 
to deduce the behavior of the value function \begin{equation}\label{dyna-prog}\begin{aligned}
V(x,\al)=  \sup_{Z\cd\in \Ax} \ex  \bigg[ \int_0^{  \eta} e^{-rs}   f(X^{x,\al}(s-),\al(s-))\cdot dZ(s)
 +    e^{-r  \eta}V(X^{x,\al}( \eta),\al( \eta))\bigg]
\end{aligned}\end{equation}
for every $(x,\al)\in S\times\M$ and stopping time $\eta$. 
A heuristic argument using the DPP \eqref{dyna-prog} yields that $V$ satisfies the following coupled system of quasi-variational inequalities
\begin{equation}\label{qvis}
\min\set{(r-\op) V(x,\al), \min_{i=1,\dots, n}\set{ D_{x_i}V(x,\al)-f_i(x,\al)}} = 0,\ \  (x,\al) \in S \times \M,
\end{equation}
where  for any $h(\cdot, \al)\in C^2$, $\al \in\M$, we define
\beq\label{op-defn}\barray
\op h(x,\al)\ad =  \frac{1}{2}\tr(\sigma \sigma'(x,\al)D^2 h(x,\al))  +  b(x,\al) \cdot D h(x,\al)
 + \sum_{j=1}^m q_{\al j}h(x,j).
   \earray
\eeq

However, without a priori result on the continuity of the value function, a rigorous proof of \eqref{dyna-prog} 
is nontrivial. Thanks to the state constraint as well as the generality of the set up of the problem,  
it seems not easy to obtain the continuity of the value function $V$ defined in \eqref{value}. Also, in the current singular control setup with regime-switching diffusion, it appears that 
the  DPP is not available from the literature.  
To overcome this difficulty, we will instead invoke the weak DPP (\cite{Bouchard-Touzi-11}); see Section \ref{sect-DPP} for the  precise statement. 
Also, the value function $V$ is not necessarily sufficiently smooth to take first and second order partial derivatives. Therefore  we aim to show in this work that $V$ satisfies \eqref{qvis} in the weak sense using the notion of   viscosity solution.  We will show that the value function $V$ is the unique viscosity solution to \eqref{qvis}. 

For convenience of later presentations, we define for any $(x,\al,\xi, p, A)\in \rr^n \times \M \times \rr^m \times \rr^n \times \SS_n$,
\beq\label{F-defn}
F_\al(x,\xi, p,A) =F(x,\al,\xi, p,A): = r \xi_\al - \frac{1}{2}\tr(\sigma\sigma'(x,\al) A) -b(x,\al) \cdot p -\sum_{j=1}^m q_{\al j} \xi_j.
\eeq
Set $\V(x)=(V(x,1), \dots, V(x,m))' \in \rr^m$, then \eqref{qvis} can be rewritten  as
\beq\label{qvi-equiv-form}
\min\set{F_\al(x,\V(x), DV(x,\al), D^2 V(x,\al)), \min_{i=1,\dots, n}\set{ D_{x_i} V(x,\al) -f_i(x,\al)}} =0, \eeq
for all $x\in S$ and each $ \al=1,\dots, m$.

As we indicated earlier,
  \eqref{qvi-equiv-form} is a coupled system of quasi-variational inequalities. Moreover, thanks to the term $\sum_{j=1}^m q_{\al j} \xi_j$ with $Q=(q_{ij})$ defined in \eqref{Q-gen},  for each $\al \in \M$, $F_\al$  is not proper with respect to the variable $\xi$   in the sense of  equations (0.1) or (0.2) in the User's Guide \cite{CIL92}.
   Note that properness assumption (and in particular equation (3.13) in \cite{CIL92})  enabled them to derive the strong comparison result
   and hence the uniqueness  of the viscosity solution. Here for our analysis, special care has to be given to handle the fact that $F_\al$ is not proper due to the coupling term. 
   Also, instead of working on a bounded domain, we are dealing with  unbounded domain $S$.  
These features make  our analysis much more involved than the classical
comparison result in  \cite{CIL92}.

\section{Some Preliminary Results}\label{sect-DPP}

We present some preliminary results in this section. The first one provides a sufficient condition for the assumption that $\Ax \not=\emptyset$ for all $(x,\al) \in S\times \M$. 


\begin{prop}
Assume there exists a function $\Psi: S\times \M\mapsto \rr_+$ satisfying 
\begin{itemize} 
\item[(i)] for each $i=1,\dots, n$ and  $\al \in \M$, \begin{displaymath}
\lim_{|x|\to \infty} \Psi(x,\al)=\infty, \text{ and }\lim_{x_i\downarrow 0} \Psi(x,\al) = \infty, 
\end{displaymath} 
\item[(ii)] $\Psi(\cdot, \al)\in C^2$ for each $\al \in \M$ and   $$ \op \Psi(x,\al) = b(x,\al)\cdot D\Psi(x,\al) + \frac{1}{2}\tr(\sigma\sigma'(x,\al) D^2 \Psi(x,\al)) + \sum_{j=1}^m q_{\al j} \Psi(x,j)\le 0,$$ 
\end{itemize}    for all  $ (x,\al) \in S\times \M$.
Then, denoting by $\zeta^{x,\al}$ the solution of \eqref{sde-switching} with initial condition $(x,\al)$,  we have \begin{equation}
 \label{eq-constraint}
 \pr\set{\zeta ^{x,\al}(t)\in S \text{ for all }t \ge 0}=1, \ \ \text{ for any }(x,\al) \in S\times \M.
 \end{equation} 
  Consequently $\Ax\not=\emptyset$ for all $(x,\al)\in S\times \M$.
\end{prop}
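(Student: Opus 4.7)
The plan is a Khasminskii-type Lyapunov/stochastic invariance argument using $\Psi$ as a barrier for the joint process $(\zeta^{\xal},\al)$. Condition~(i) guarantees that for each integer $n\ge 1$, the set $K_n := \bigcup_{\al\in\M}\set{x\in S : \Psi(x,\al)<n}$ is relatively compact in $S$, staying uniformly bounded and bounded away from every coordinate hyperplane $\set{x_i=0}$ (here I use that $\M$ is finite). Condition~(ii) makes $\Psi(\zeta^{\xal}(\cdot),\al(\cdot))$ a local supermartingale after appropriate stopping. Together these will force $\zeta^{\xal}$ to stay in $S$ for all time.

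First, for each $n\ge 1$ I would introduce the stopping time
\bed
\tau_n := \inf\set{t\ge 0 : \Psi(\zeta^{\xal}(t),\al(t))\ge n}\wedge \inf\set{t\ge 0 : \zeta^{\xal}(t)\notin S},
\eed
so that on $[0,\tau_n)$ the process $\zeta^{\xal}$ lies in $K_n$, and on $\set{\tau_n<\infty}$ the right-continuity of $(\zeta^{\xal},\al)$ yields $\Psi(\zeta^{\xal}(\tau_n),\al(\tau_n))\ge n$. Next, I would apply the generalized It\^o--Dynkin formula for regime-switching diffusions (cf.\ \cite{MaoY,YZ-10}) to $\Psi$ stopped at $t\wedge\tau_n$. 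Because $\Psi(\cdot,\al)\in C^2$ is bounded on the relatively compact set $\overline{K_n}\subset S$, the martingale terms from both the Brownian integrator and the compensated Markov-chain increments have zero expectation after the stop, and condition~(ii) gives
\bed
\ex\Psi(\zeta^{\xal}(t\wedge\tau_n),\al(t\wedge\tau_n))-\Psi(x,\al) = \ex\int_0^{t\wedge\tau_n}\op\Psi(\zeta^{\xal}(s),\al(s))\,ds \le 0.
\eed

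Combining this with the lower bound $\Psi\ge 0$ and the inequality $\Psi(\zeta^{\xal}(t\wedge\tau_n),\al(t\wedge\tau_n))\ge n$ on $\set{\tau_n\le t}$ yields
\bed
n\,\pr\set{\tau_n\le t}\le \ex\Psi(\zeta^{\xal}(t\wedge\tau_n),\al(t\wedge\tau_n))\le \Psi(x,\al),
\eed
so $\pr\set{\tau_n\le t}\le \Psi(x,\al)/n\to 0$ as $n\to\infty$. Since $(\tau_n)$ is nondecreasing, $\tau_\infty:=\lim_n\tau_n=\infty$ almost surely, and for each fixed $t\ge 0$ the event $\set{\tau_n>t}$ (which has probability tending to $1$) forces $\zeta^{\xal}(s)\in K_n\subset S$ for every $s\in[0,t]$. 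This establishes \eqref{eq-constraint}. The consequence $\Ax\not=\emptyset$ is then immediate upon choosing $Z\equiv 0$: admissibility conditions (i) and (iii) are trivially satisfied (with $\int_0^\infty e^{-rs}d\abs{Z}(s)=0$), and (ii) follows from \eqref{eq-constraint} since $X^{\xal}\equiv\zeta^{\xal}$.

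The one technical nuance worth flagging is the application of It\^o's formula across regime jumps: the finite-variation jumps of $\al$ contribute, in expectation, the coupling term $\sum_{j}q_{\al j}\Psi(x,j)$ to the drift, which is precisely what has been built into $\op$ in \eqref{op-defn}. Once this is granted via the references above, the supermartingale inequality is routine, and the remainder is a standard compactification argument; no further subtlety arises from either the unboundedness of $S$ or the coupling of the regimes.
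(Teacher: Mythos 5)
Your proof is correct and rests on the same Khasminskii-type Lyapunov barrier argument as the paper's: generalized It\^o formula plus condition~(ii) to obtain a supermartingale inequality, then condition~(i) to force a blow-up at the stopping time and conclude nonexplosion/invariance. The only differences are cosmetic---you stop on sublevel sets of $\Psi$ and estimate $\pr\{\tau_n\le t\}\le\Psi(x,\al)/n$ directly, while the paper stops on the coordinate/radius boundaries $\{|x|=k\}\cup\{x_i=1/k\}$ and argues by contradiction via $\Psi_k\to\infty$---so this is the same proof in a slightly cleaner presentation.
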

\begin{proof}
It suffices to prove 
\eqref{eq-constraint}, which leads to   $Z\equiv 0 \in \Ax $ for any  $(x,\al) \in S\times \M$. To this end, we consider $(x,\al)\in S\times \M$ and define
$$ \tau_k:=\inf\set{t\ge 0: \abs{\zeta^{x,\al}(t)} \ge k \text{ or } \zeta_i^{x,\al}(t) \le \frac{1}{k} \text{ for some }i=1,\dots,n}.$$ Note that $\{\tau_k\}$ is a nondecreasing sequence of $\F_t$-stopping times. 
Now it is enough to show that $ \tau_k \to \infty $ with probability 1.  Suppose on the contrary that \begin{equation}
\label{eq1-prop31}
\liminf_{k\to \infty}\pr\set{\tau_k < \infty} = \delta >0.
\end{equation} Applying generalized It\^o's formula to the function $\Psi$ and using condition (ii), we obtain that for any $t\ge 0$, 
\begin{displaymath}
\begin{aligned}
\ex\left[\Psi(\zeta(\tau_k \wedge t), \al(\tau_k \wedge t)\right] & = \Psi(x,\al) + \ex\left[\int_0^{\tau_k \wedge t} \op \Psi(\zeta(s),\al(s))ds \right] \\
 & \le \Psi(x,\al).
 \end{aligned}
\end{displaymath}
Then since $\Psi \ge 0$,  it follows   from condition (i) and \eqref{eq1-prop31} that 
\begin{displaymath}\begin{aligned}
\Psi(x,\al) &  \ge \ex\left[\Psi(\zeta(\tau_k \wedge t), \al(\tau_k \wedge t)\right] \ge \ex\left[\Psi(\zeta(\tau_k),\al(\tau_k)I_{\set{\tau_k \le t}}\right] \\ 
& \ge  \Psi_k\pr\set{\tau_k \le t} \to \infty, \text{ as } k \to \infty,
 \end{aligned}
\end{displaymath} 
where $$\Psi_k : =\inf\set{\Psi(x,j), |x|=k \text{ or } x_i = \frac{1}{k} \text{ for some }i=1,\dots, n, \text{ and }j\in \M} .$$
This is a contradiction and hence $\tau_k \to \infty $ with probability 1 as $k \to \infty$.
\end{proof}

We will need the following proposition in the proof of Theorem \ref{thm-viscosity}.
\begin{prop}\label{lem-increasing}
  For each $\al \in \M$ and any $x,y \in S$ with $y\le x$, we have
  \begin{align}\label{value-increasing}V(x,\al) & \ge   f(x,\al)\cdot ( x-y) + V(y,\al),\\ 
  \label{Vstar} V^*(x,\al) & \ge   f(x,\al)\cdot ( x-y) + V^*(y,\al).\end{align}
\end{prop}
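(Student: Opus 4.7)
The plan is to prove \eqref{value-increasing} by concatenating, at $(x,\al)$, an initial push of size $x-y$ (earning the instantaneous payoff $f(x,\al)\cdot(x-y)$) with an $\e$-optimal control at $(y,\al)$; the envelope inequality \eqref{Vstar} will then follow by taking a $\limsup$ along sequences realizing $V^*$.

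For \eqref{value-increasing}, I would fix $\e>0$, pick $Z\in\A_{y,\al}$ with $J(y,\al,Z) \ge V(y,\al) - \e$, and argue that without loss of generality $\Delta Z(0)=0$: an initial atom $a = \Delta Z(0)$ can be replaced by its linear smearing $Z^\delta(t):=\min(t/\delta,1)\cdot a + (Z(t) - a\one_{\{t\ge 0\}})$ on $[0,\delta]$, and standard stability for the controlled regime-switching SDE together with the continuity of $f$ gives $J(y,\al,Z^\delta)\to J(y,\al,Z)$ as $\delta\downarrow 0$. Now define $\tilde Z(t) := (x-y) + Z(t)$ for $t\ge 0$; a routine check (adaptedness, monotonicity, cadlag, finite discounted total variation) shows $\tilde Z\in\A_{x,\al}$, and the state process $\tilde X^{x,\al}$ driven by $\tilde Z$ satisfies $\tilde X(0) = x-(x-y) = y = X^{y,\al}(0)$ and the identical SDE on $(0,\infty)$, so $\tilde X \equiv X^{y,\al}$ on $[0,\infty)$ and in particular stays in $S$. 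Splitting the payoff integral at time $0$ and using $\Delta Z(0)=0$ yields
\begin{displaymath}
J(x,\al,\tilde Z) \;=\; f(x,\al)\cdot(x-y) + J(y,\al,Z) \;\ge\; f(x,\al)\cdot(x-y) + V(y,\al) - \e,
\end{displaymath}
and letting $\e\downarrow 0$ gives \eqref{value-increasing}.

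For \eqref{Vstar}, choose a sequence $y_n\to y$ in $S$ with $V(y_n,\al)\to V^*(y,\al)$ and set $x_n := y_n + (x-y)$. Since $y\in S$ has strictly positive components and $x-y\ge 0$ componentwise, both $y_n$ and $x_n$ lie in $S$ for all large $n$, with $y_n\le x_n$ and $x_n\to x$. Applying \eqref{value-increasing} at each such $n$ gives $V(x_n,\al)\ge f(x_n,\al)\cdot(x-y) + V(y_n,\al)$, and taking $\limsup_{n\to\infty}$ together with the continuity of $f$ in $x$ yields \eqref{Vstar}.

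The main obstacle I anticipate is the bookkeeping at time $0$: if the continuation control $Z$ carries an atom, it would get lumped with the push $x-y$ into a single jump of the concatenated control and charged at the lower marginal rate $f(x,\al) \le f(y,\al)$ rather than $f(y,\al)$, producing the wrong-signed correction $[f(x,\al)-f(y,\al)]\cdot\Delta Z(0) \le 0$. The linear-smearing step eliminates this discrepancy by pushing the atom off time zero at arbitrarily small cost in payoff.
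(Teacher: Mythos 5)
Your concatenation plan for \eqref{value-increasing} is the right idea, and you have put your finger on the genuine subtlety: an atom $a=\Delta Z(0)>0$ in the $\e$-optimal continuation at $(y,\al)$ merges with the push $x-y$ in the concatenated control and is then priced at $f(x,\al)\le f(y,\al)$ rather than $f(y,\al)$, so the naive concatenation only yields $J(x,\al,\tilde Z)=f(x,\al)\cdot(x-y)+J(y,\al,Z)-[f(y,\al)-f(x,\al)]\cdot a$, which is the wrong direction. Where the argument is incomplete is the claim that ``WLOG $\Delta Z(0)=0$'' via linear smearing and ``standard stability.'' Two things are being swept under that phrase. First, admissibility of $Z^\delta$: because $b$ and $\sigma$ depend on the current state, the perturbed trajectory $X^\delta$ is \emph{not} $X$ plus $(Z-Z^\delta)$, and passing from $Z^\delta\le Z$ to $X^\delta\in S$ is a comparison-theorem statement for the controlled SDE --- available in one dimension or under structural conditions on $\sigma$ and $b$, but false in general in $\rr^n$. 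Yet $X(t)\in S$ for all $t$ is a hard constraint in the definition of $\Ax$, and the reference path $X$ may approach $\partial S$, so there is no a priori reason the smeared path stays in $S$. Second, the payoff convergence $J(y,\al,Z^\delta)\to J(y,\al,Z)$ requires continuous dependence of trajectories on the control, i.e.\ a Gronwall/Lipschitz estimate, but at the point where this proposition is invoked the only standing hypothesis on $b,\sigma$ is weak existence and uniqueness of \eqref{sde-switching}. So the reduction to controls with no atom at zero is a substantive claim requiring proof under additional regularity, not a routine modification; the paper itself defers this step to \cite{Song-S-Z}, which is one-dimensional, where the comparison machinery is available.

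Your passage from \eqref{value-increasing} to \eqref{Vstar} is correct and is the intended meaning of ``follows directly'': choose $y_n\in S$, $y_n\to y$, $V(y_n,\al)\to V^*(y,\al)$, set $x_n=y_n+(x-y)\in S$ so $x_n\to x$ and $y_n\le x_n$, apply \eqref{value-increasing}, and take $\limsup$ using continuity of $f$ together with $\limsup_n V(x_n,\al)\le V^*(x,\al)$.
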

\begin{proof}Equation \eqref{value-increasing} can be established using exactly the same arguments as those in \cite{Song-S-Z}, while \eqref{Vstar} follows from \eqref{value-increasing} directly.\end{proof}

The next proposition can be established using similar arguments as those in \cite{Bouchard-Touzi-11}.
\begin{prop}\label{prop-DPP}
Fix $(x,\al) \in \rr_+^n \times \M$. Then for any stopping time $\tau$,  we have 
\begin{equation}
\label{DPP-eq1}
V(x,\al) \le \sup_{Z\in \Ax} \ex \left[ \int_0^\tau e^{-rs} f(X^{x,\al}(s-),\al(s-) \cdot dZ(s) + e^{-r \tau} V^*(X^{x,\al}(\tau), \al(\tau))\right],
\end{equation} 
and 
\begin{equation}
\label{DPP-eq2}
V(x,\al) \ge \sup_{Z\in \Ax} \ex \left[ \int_0^\tau e^{-rs} f(X^{x,\al}(s-),\al(s-) \cdot dZ(s) + e^{-r \tau} \varphi(X^{x,\al}(\tau), \al(\tau))\right],
\end{equation} for all upper-semicontinuous functions $\varphi$ such that $V\ge \vphi$ on $\rr_+^n \times \M$. 
\end{prop}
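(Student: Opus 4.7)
The plan is to adapt the weak DPP argument of \cite{Bouchard-Touzi-11} to the present setting of a regime-switching diffusion with singular control and a state constraint. Both inequalities rely on the (strong) Markov property of the pair $(X,\al)$, which holds because the coefficients $b,\sigma$ and the generator $Q$ depend only on the current state $(x,\al)$, together with a concatenation procedure for admissible controls $Z\in\Ax$. Throughout, for $Z\in\Ax$ and an $\F_t$-stopping time $\tau$, I will denote by $Z^\tau(\cdot):=Z(\tau+\cdot)-Z(\tau)$ the shifted control, and I will freely use that, conditionally on $\F_\tau$ on the event $\{X^{x,\al}(\tau)=y,\ \al(\tau)=\beta\}$, the process $(X^{x,\al}(\tau+\cdot),\al(\tau+\cdot))$ has the same law as $(X^{y,\beta}(\cdot),\al^{\beta}(\cdot))$, with $Z^\tau$ adapted to the shifted filtration and lying in $\A_{y,\beta}$.

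For the upper bound \eqref{DPP-eq1}, I would fix $Z\in\Ax$ and split the payoff as
\[
J(x,\al,Z)=\ex\!\left[\int_0^\tau e^{-rs} f(X^{x,\al}(s-),\al(s-))\cdot dZ(s)\right]+\ex\!\left[e^{-r\tau}J\bigl(X^{x,\al}(\tau),\al(\tau),Z^\tau\bigr)\right],
\]
using the tower property together with the Markov property. Since $Z^\tau\in\A_{X^{x,\al}(\tau),\al(\tau)}$ on the appropriate event, the inner expression is bounded above by $V(X^{x,\al}(\tau),\al(\tau))\le V^*(X^{x,\al}(\tau),\al(\tau))$, and \eqref{DPP-eq1} follows upon taking the supremum over $Z$. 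The only measurability subtlety is the dependence of $Z^\tau$ on the initial data, but this is circumvented because we only need an upper bound on the conditional expectation.

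The lower bound \eqref{DPP-eq2} is the substantive part of the statement and is where the USC hypothesis on $\vphi$ plays its role. Fix an USC function $\vphi\le V$ and $\e>0$. For each $(y,\beta)\in S\times\M$ choose an $\e$-optimal control $Z^{y,\beta}\in\A_{y,\beta}$ with $J(y,\beta,Z^{y,\beta})\ge V(y,\beta)-\e\ge \vphi(y,\beta)-\e$. Upper semicontinuity of $\vphi$ and lower semicontinuity of $y\mapsto J(y,\beta,Z^{y,\beta})$ in a suitable sense (obtained via continuous dependence of $X^{y,\beta}$ on its initial data, the monotonicity in \propref{lem-increasing}, and dominated convergence) yield, for every $\beta$, an open neighborhood $B_{y,\beta}\subset S$ of $y$ such that
\[
\vphi(y',\beta)\le J(y,\beta,Z^{y,\beta})+2\e\quad\text{for all }y'\in B_{y,\beta}.
\]
By a Lindel\"of/partition-of-unity argument on the second-countable space $S$, for each $\beta\in\M$ extract a countable disjoint Borel partition $\{B_k^{\beta}\}_{k\ge 1}$ of $S$ with representative points $y_k^\beta\in B_k^\beta$ inheriting the above inequality.

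Now, given an arbitrary $Z\in\Ax$, define a concatenated control $\widetilde Z$ by following $Z$ on $[0,\tau]$ and, on $\{\tau<\infty\}$, on each set $\{X^{x,\al}(\tau)\in B_k^\beta,\ \al(\tau)=\beta\}$ following the shifted $\e$-optimal control $Z^{y_k^\beta,\beta}$ but with initial state $X^{x,\al}(\tau)$. The key verifications are (a) $\widetilde Z\in\Ax$, i.e.\ the state constraint is preserved past $\tau$---here one uses that $Z^{y_k^\beta,\beta}\in\A_{y_k^\beta,\beta}$ keeps the reference process in $S$, combined with \propref{lem-increasing} to propagate admissibility after the possible jump at $\tau$---and (b) the resulting payoff satisfies
\[
V(x,\al)\ge J(x,\al,\widetilde Z)\ge \ex\!\left[\int_0^\tau e^{-rs}f(X^{x,\al}(s-),\al(s-))\cdot dZ(s)+e^{-r\tau}\bigl(\vphi(X^{x,\al}(\tau),\al(\tau))-2\e\bigr)\right].
\]
Taking the supremum over $Z\in\Ax$ and letting $\e\downarrow 0$ delivers \eqref{DPP-eq2}.

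The main obstacle I expect is item (a) above: constructing the concatenated control in a measurable way so that the state constraint $X(t)\in S$ is maintained after $\tau$, and so that the associated $\vert\widetilde Z\vert$ satisfies the integrability condition in~(iii) of admissibility. In the unconstrained diffusion case of \cite{Bouchard-Touzi-11} this measurable selection is classical; in our setting the extra ingredients are the c\`adl\`ag/nondecreasing structure of $Z$, the presence of the regime variable $\al$ (handled by partitioning each of the finitely many sheets $S\times\{\beta\}$ separately), and the fact that the trajectory lives in the cone $S=\rr_+^n$. The latter is ensured because each building block $Z^{y_k^\beta,\beta}$ is admissible from its own initial condition and the concatenation is performed at $\tau$ with the correct initial state $X^{x,\al}(\tau)$, so no additional push into $\partial S$ is introduced.
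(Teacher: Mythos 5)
Your plan follows exactly the Bouchard--Touzi weak DPP scheme that the paper itself points to (the paper gives no proof beyond the citation), so the approach is the same one. The upper bound \eqref{DPP-eq1} is correct and complete modulo the standard tower/Markov property argument: since the integrand $J(X^{x,\al}(\tau),\al(\tau),Z^\tau)$ may fail to be measurable, passing to the Borel-measurable majorant $V^*\ge V$ is exactly what makes the inequality well-posed, and you bound correctly.

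There is, however, a real gap in the lower bound \eqref{DPP-eq2}, precisely at the point you flag as ``the main obstacle.'' Your displayed selection inequality $\vphi(y',\beta)\le J(y,\beta,Z^{y,\beta})+2\e$ for $y'\in B_{y,\beta}$ is not yet the inequality you use in step (b): after concatenation at $\tau$, with $X^{x,\al}(\tau)=y'\in B_k^\beta$, the post-$\tau$ payoff is $J(y',\beta,Z^{y_k^\beta,\beta})$, not $J(y_k^\beta,\beta,Z^{y_k^\beta,\beta})$. To bridge this you must know (i) that $Z^{y_k^\beta,\beta}$ is admissible {\em from} $y'$, and (ii) that $J(y',\beta,Z^{y_k^\beta,\beta})\ge J(y_k^\beta,\beta,Z^{y_k^\beta,\beta})-\e$ for $y'$ near $y_k^\beta$. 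In the unconstrained setting of Bouchard--Touzi, (i) is automatic and (ii) follows from continuity in the initial condition; here (i) can genuinely fail because a control that keeps $X^{y_k^\beta,\beta}$ in $S=\rr_+^n$ need not keep $X^{y',\beta}$ in $S$ when some component of $y'$ is smaller than that of $y_k^\beta$. Invoking Proposition~\ref{lem-increasing} does not propagate admissibility; it is a statement about $V$, not about trajectories.

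The fix is the one used in singular-control/state-constraint DPP arguments and is implicit in the monotonicity of the model: choose the partition $\{B_k^\beta\}$ of $S$ to consist of half-open boxes and let the representative $y_k^\beta$ be the {\em minimal corner} of $B_k^\beta$ (so $y_k^\beta\le y'$ componentwise for all $y'\in B_k^\beta$); then on $\{X(\tau)=y',\,\al(\tau)=\beta,\,y'\in B_k^\beta\}$ define $\widetilde Z$ to apply an instantaneous push $y'-y_k^\beta\ge 0$ at time $\tau$ and thereafter follow $Z^{y_k^\beta,\beta}$. This keeps the state in $S$, preserves conditions (i)--(iii) of admissibility (the extra jump has finite size, so the $e^{-rs}\,d|Z|$ integrability carries over), and the harvested reward from the push is $f(y',\beta)\cdot(y'-y_k^\beta)\ge 0$, so it can only help. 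With this construction the post-$\tau$ payoff is bounded below by $J(y_k^\beta,\beta,Z^{y_k^\beta,\beta})\ge V(y_k^\beta,\beta)-\e\ge\vphi(y_k^\beta,\beta)-\e$, and upper semicontinuity of $\vphi$ at $y_k^\beta$ (shrinking the box) gives $\vphi(y_k^\beta,\beta)\ge\vphi(y',\beta)-\e$, closing the chain without needing continuity of $J(\cdot,\beta,Z^{y_k^\beta,\beta})$ in the initial state. The measurability of $\widetilde Z$ comes for free from the countable partition, exactly as you describe. With this modification your outline becomes a correct proof along the lines the paper intends.
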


We finish this section with the   verification theorem, whose proof is similar to those in \cite{Song-S-Z}. 

\begin{thm}\label{thm-verification}
 Suppose there exists a function $\phi: S \times \M \mapsto \rr_+$ solving \eqref{qvis}.

\begin{itemize}\item[{\em (a)}] Then $\phi(x,\al) \ge V(x,\al)$ for  every $(x,\al)\in S \times \M$.

\item[{\em (b)}] Define the {\em non-intervention region}
$$ {\cal C}= \set{(x,\al)\in S\times \M:
\max_{i=1,\dots,n}\set{ f_i(x,\al)-D_{x_i}\phi(x,\al)} < 0}. $$
Assume  
 there exists a   strategy $ Z \in \mathcal A_{x,\al}$ such that
 \begin{align}\label{cond-op0} & ( X(t),\al(t)) \in  {\cal C} \text { for Lebesgue almost all }  0 \le t  < \infty,   \\
 \label{cond-op1}
&\int_0^t  ( D \phi( X(s),\al(s)) -f(X(s),\al(s))) \cdot  d \wdt Z^c(s)   =0, \text{ for any } 0\le  t  < \infty,\\
\label{cond-op3}&  \lim_{N\to \infty} \ex  \left[e^{-r(\tau\wedge N \wedge \beta_N)} \phi(X(\tau\wedge N \wedge \beta_N),\al(\tau\wedge N \wedge \beta_N))\right]=0,
\end{align}
and that if $  X(s) \not=  X(s-)$, then
\begin{equation}\label{cond-op2}
  \phi( X(s),\al(s-))-\phi(X(s-),\al(s-)) = -   f(\wdt X(s-),\al(s-)) \cdot  \Delta \wdt Z(s) ,
\end{equation}
where $\beta_N:=\inf\{t\ge 0: | X(t)| \ge N\},$ and  $ X =X^{x,\al}$ denotes the solution of \eqref{dyna-vec}. Then $\phi(x,\al)= V(x,\al)$ for every $(x,\al )\in S \times \M$ and $\wdt Z $ is an optimal   strategy.
\end{itemize} \end{thm}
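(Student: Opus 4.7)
The plan is a standard verification argument based on the generalized It\^o formula for regime-switching diffusions, applied to the test process $t\mapsto e^{-rt}\phi(X(t),\al(t))$. The quasi-variational inequality \eqref{qvis} supplies two one-sided bounds, namely $(r-\op)\phi\ge 0$ and $D_{x_i}\phi \ge f_i$ for every $i$, and these inequalities work in the right direction to produce $\phi\ge V$ in (a) when integrated along an arbitrary admissible trajectory. The hypotheses \eqref{cond-op0}--\eqref{cond-op2} in (b) are calibrated so that each of these inequalities becomes an equality along the trajectory driven by $\wdt Z$, forcing $\phi(x,\al) = J(x,\al,\wdt Z)$, while \eqref{cond-op3} enables the passage to the infinite horizon.

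For part (a), fix $Z\in\A_{\xal}$, decompose $Z=Z^c+Z^d$ into continuous and pure-jump parts, and apply It\^o's formula to $e^{-rt}\phi(X(t),\al(t))$ between $0$ and $\tau_N := N\wedge \beta_N$. Because $\al$ is a Markov chain independent of $W$ with finitely many jumps on any bounded interval, $Z$ and $\al$ almost surely have no common jump times, so each jump of $X$ produced by $Z$ occurs while $\al$ is locally constant. The resulting identity involves the drift integral $\int_0^{\tau_N} e^{-rs}(\op-r)\phi\,ds$, the singular integral $-\int_0^{\tau_N} e^{-rs}D\phi\cdot dZ^c(s)$, a sum of the form $\sum e^{-rs}\bigl[\phi(X(s),\al(s-))-\phi(X(s-),\al(s-))\bigr]$ over jump times of $Z$, and a local martingale coming from $\sigma\,dW$ and the compensated jumps of $\al$, which vanishes in expectation after localization at $\tau_N$. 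The QVI gives $(r-\op)\phi\ge 0$ and $D\phi\cdot dZ^c\ge f\cdot dZ^c$ (using nonnegativity of $dZ^c$), while the line-integral representation along the segment $X(s)=X(s-)-\Delta Z(s)$, combined with componentwise monotonicity of $f$ in $x$, yields $\phi(X(s-),\al(s-))-\phi(X(s),\al(s-))\ge f(X(s-),\al(s-))\cdot\Delta Z(s)$. Collecting these estimates, taking expectation (with $\phi\ge 0$), and sending $N\to\infty$ by monotone convergence on the $dZ$-integral produce $\phi(x,\al)\ge J(x,\al,Z)$; the supremum over $Z\in\A_{\xal}$ then yields (a).

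For part (b), the three structural conditions promote each inequality used in (a) into an equality along $\wdt X=X^{\xal}$. Indeed, \eqref{cond-op0} places $(\wdt X(s),\al(s))$ in $\mathcal C$, where the gradient constraint is strict so that \eqref{qvis} forces $(r-\op)\phi=0$; \eqref{cond-op1} converts $D\phi\cdot d\wdt Z^c$ into $f\cdot d\wdt Z^c$; and \eqref{cond-op2} converts each jump decrement $\phi(\wdt X(s),\al(s-))-\phi(\wdt X(s-),\al(s-))$ into $-f(\wdt X(s-),\al(s-))\cdot\Delta \wdt Z(s)$. Combining the continuous and jump pieces into a single Stieltjes integral against $d\wdt Z$, taking expectations at $\tau_N$, and passing to the limit $N\to\infty$ through the transversality hypothesis \eqref{cond-op3} yield
\[
\phi(x,\al)\;=\;\ex\Big[\int_0^{\infty} e^{-rs}\, f(\wdt X(s-),\al(s-))\cdot d\wdt Z(s)\Big]\;=\;J(x,\al,\wdt Z)\;\le\;V(x,\al),
\]
which together with (a) delivers both the identity $\phi=V$ and the optimality of $\wdt Z$.

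The main obstacle is the careful bookkeeping at jump times: one must justify the line-integral comparison $\phi(X(s-),\al(s-))-\phi(X(s),\al(s-))\ge f(X(s-),\al(s-))\cdot\Delta Z(s)$ using only the gradient constraint and the monotonicity of $f$ in $x$, exclude simultaneous jumps of $Z$ and $\al$, and reassemble the continuous and jump pieces of $Z$ into a single Stieltjes integral against $dZ$. A secondary issue is the passage to the limit $N\to\infty$: in (a) it is painless because $\phi\ge 0$ and the discounted $f\cdot dZ$-integral is nonnegative and integrable by admissibility and the finiteness of $f$, whereas in (b) the transversality condition \eqref{cond-op3} is precisely what eliminates the boundary term $e^{-r\tau_N}\phi(\wdt X(\tau_N),\al(\tau_N))$ and allows the identification with $J(x,\al,\wdt Z)$.
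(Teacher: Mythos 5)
Your proposal is correct and follows the standard verification argument that the paper itself delegates to \cite{Song-S-Z}: apply the generalized It\^o formula to $e^{-rt}\phi(X(t),\al(t))$ up to $N\wedge\beta_N$, use $(r-\op)\phi\ge 0$, $D_{x_i}\phi\ge f_i$, and the componentwise monotonicity of $f$ along the jump segment $[X(s),X(s-)]$ for part (a); then use \eqref{cond-op0}--\eqref{cond-op2} to promote each inequality to an equality and the transversality condition \eqref{cond-op3} to pass to the infinite horizon for part (b). One small slip: your claim that $Z$ and $\al$ almost surely have no common jump times does not follow from the independence of $\al$ and $W$, since $Z$ is merely $\F_t$-adapted and may well be triggered by jumps of $\al$; this claim is also unnecessary, because the generalized It\^o formula already decomposes any simultaneous jump of $(X,\al)$ as $[\phi(X(s),\al(s))-\phi(X(s),\al(s-))]+[\phi(X(s),\al(s-))-\phi(X(s-),\al(s-))]$, the first bracket being compensated by the $\sum_j q_{\al j}\phi(x,j)$ term inside $\op$ and the second being exactly the $Z$-jump increment you estimate, so the bookkeeping goes through without any assumption on the jump times.
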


\section{Viscosity Solution: Existence}\label{sect-viscosity}
This section is devoted to the properties of the value function $V$.
In particular, we aim to characterize $V$ as a viscosity solution to the quasi variational inequality \eqref{qvi-equiv-form}. Let's first recall the notion of viscosity solution.  

\begin{defn}\label{d-visc}    A function
${\mathbf u}(x)= (u(x,1), \dots, u(x,m))'$ is said to be
a {\em viscosity subsolution}   of
\eqref{qvi-equiv-form} on $\bar S \times \M$, if
 for any $(x_0,\al_0) \in \bar S \times \M$ and functions
  $\varphi(\cdot, \al) \in C^2(S), \al \in \M$ satisfying $(u^*-\vphi)(\xal)  \le (u^*-\vphi)(\xalz)  =0  $   for
  all $(x,\al) \in \bar S\times \M$, we have
  $$\min\set{F_{\al_0}(\xz,{\mathbf u}^*(\xz), D\vphi(\xalz), D^2 \vphi(\xalz)), \min_{i=1,\dots, n}\set{ D_{x_i} \vphi(\xalz) -f_i(\xalz)}}  \le 0. $$
 Similarly, a function
${\mathbf u}(x)= (u(x,1), \dots, u(x,m))'$ is said to be
a {\em viscosity supersolution}   of
\eqref{qvi-equiv-form} in $S\times \M$, if
 for any $(x_0,\al_0) \in S \times \M$ and functions
  $\varphi(\cdot, \al) \in C^2(S),\al\in \M$ satisfying $(u_*-\vphi)(\xal)  \ge (u_*-\vphi)(\xalz)  =0  $ for
  all $(x,\al) \in S\times \M$, we have
  $$\min\set{F_{\al_0}(\xz,{\mathbf u}_{*}(\xz), D\vphi(\xalz), D^2 \vphi(\xalz)), \min_{i=1,\dots, n}\set{ D_{x_i} \vphi(\xalz) -f_i(\xalz)}}  \ge 0. $$
The function
$u$ is said to be a {\em constrained viscosity solution}, if it
is both a viscosity
subsolution in $\bar S \times \M$ and a viscosity supersolution in $S\times \M$.
\end{defn}

Before presenting the main result of this section, we shall first study several examples to   illustrate Definition  \ref{d-visc}. These examples will also help us to  motivate later results.
\begin{exm}\label{exm1}
Consider the QVI
\begin{equation}
\label{qvi-ex1}
\min\set{u(x)-u'(x), u'(x)-1}=0, \ \ x\in (0,\infty).
\end{equation}
We claim that $u(x)= K e^x$, $K\ge 1$ is a constrained viscosity solution of \eqref{qvi-ex1} on $[0,\infty)$. In fact, if $x>0$, then we compute
\begin{displaymath}
 {\min\set{u(x)-u'(x), u'(x)-1}} = \min\set{0, Ke^x -1}=0.
 \end{displaymath}
Therefore it remains to verify that $u(x)=Ke^x$ is a subsolution on $[0,\infty)$ using Definition \ref{d-visc}.  Suppose $\phi\in C^1$ and satisfies 
$(u-\phi)(x) \le (u-\phi)(0)=0$ for $x\in [0,\infty)$ in a neighborhood of $0$. Then it follows that $\phi'(0) \ge u'(0) =K$ and hence 
\begin{displaymath}
\begin{aligned}
\min\set{\phi(0)- \phi'(0), \phi'(0)- 1} = \min\set{K -\phi'(0), \phi'(0)-1} =K -\phi'(0) \le 0. 
\end{aligned}
\end{displaymath} 
Thus the claim follows. 

Next we show  that $v(x)=x+1$ is also a constrained viscosity solution on $[0,\infty)$. In fact, it is easy to see that $v(x)=x+1 $ solves \eqref{qvi-ex1} for $ x>0$. Thus it remains to show 
that it is also a subsolution on $[0,\infty)$. To this end, let $\vphi \in C^1$ with $(v-\vphi)(x) \le (v-\vphi)(0) =0$ for $x\in [0,\infty)$ in a neighborhood of $0$. Then we have $\vphi'(0) \ge v'(0) =1$ and therefore   
 \begin{displaymath}
\min\set{\vphi(0)-\vphi'(0),\vphi'(0)-1}= \min\set{1-\vphi'(0),\vphi'(0)-1} = 1- \vphi'(0) \le 0 .
\end{displaymath}
This shows that $v$ is a subsolution and thus a constrained solution on $[0,\infty)$.

Note that the  controlled process corresponding to \eqref{qvi-ex1} is $dX(t)= 1\cdot dt + 0 \cdot dW(t)-dZ(t)$ or $X(t) = x+ t-Z(t)$ for $t\ge 0$ and the objective is to maximize $J(x,Z)=\ex_x\int_0^\infty e^{-t}dZ(t)$. For this process, it is clear that $\mathcal A_x \not= \emptyset$ for all $x\in[0,\infty)$. 
Moreover, from the state constraint, $Z(t) \le x+t$ for all $t\ge 0$. Then it follows that 
\begin{displaymath}
J(x,Z) =   \ex_x\int_0^\infty \int_t^\infty e^{-s}ds dZ(t) = \ex_x \int_0^\infty \int_0^s dZ(t) e^{-s} ds  \le \ex_x \int_0^\infty e^{-s} (x+s)ds = x+1. 
\end{displaymath} Thus the value function $V(x) \le x+1$. 
In fact,  $V(x)=x+1$ and $Z^*(t) := xI_{\set{t=0}} +   I_{\set{t>0}}\int_0^t s ds$ is an optimal control, since $J(x,Z^*) = x+ \int_0^\infty t e^{-t}dt =x+1 $. 

To conclude, the value function $V$ is the unique constrained viscosity solution of \eqref{qvi-ex1} on $[0,\infty)$ in  the class of functions with  polynomial growth rate. 
\hfill $\Box$
\end{exm}

\begin{exm}\label{exm2}
In this example, we demonstrate that the QVI
\begin{equation}
\label{qvi-ex2}
\min\set{u(x)-u''(x), u'(x) -1 }=0, \ \ x\in(0,\infty)
\end{equation}
has no constrained viscosity solution on $[0,\infty)$. 

First, one can show that $u(x)=x + c$ is not a constrained  viscosity solution of \eqref{qvi-ex2} on $[0,\infty)$, where $c$ is a constant. Certainly it is the case if $c<0$ since for $x\in (0, -c)$, we have $\min \set{u(x)-u''(x), u'(x)-1} =\min\set{x+ c, 1-1} =x+c < 0$. 
Now let's consider the case when $c \ge 0$.  
The function $\phi(x)= c+ 2x-(1-\frac{c}{2}) x^2$ satisfies 
\begin{displaymath}
(u-\phi)(x) = -x + (1-\frac{c}{2}) x^2  \le (u-\phi)(0) =0, \ \ \text{ for }x\ge 0 \text{ sufficiently small},
\end{displaymath} 
and $\phi'(x)=2-(2-c)x$, $\phi''(x)=-2+ c$. Thus we have 
\begin{displaymath}
\min\set{\phi(0)-\phi''(0), \phi'(0)-1}= \min\set{c-(-2+ c), 2-1} >0;
\end{displaymath}
this shows that $u(x)=x+c$ is not a subsolution on $[0,\infty)$.

Next we show that $u(x)= c_1 e^x + c_2 e^{-x}$ is not a constrained viscosity solution of \eqref{qvi-ex2} on $[0,\infty)$ either, where $c_1, c_2$ are constants.
Note that for $x\ge 0$ small, $$ \begin{aligned}
u(x)& = c_1 (1+ x + \frac{1}{2}x^2 + o (x^2) )+ c_2(1- x+ \frac{1}{2}x^2 + o (x^2) ) \\ &= (c_1 +c_2) + (c_1-c_2)x + \frac{1}{2}(c_1+ c_2) x^2 + o(x^2). \end{aligned}$$
If $c_1 + c_2>0$, then we consider $\phi(x)=(c_1 +c_2)  + (\abs{c_1-c_2} + 2)x + \frac{1}{3}(c_1+c_2)x^2$. Clearly we have $(u-\phi)(x) \le (u-\phi)(0) =0$ for $x$ small and 
\begin{displaymath}
\min\set{\phi(0)-\phi''(0), \phi'(0)-1}= \min\set{(c_1+c_2) -\frac{2}{3}(c_1+c_2), \abs{c_1-c_2}+2-1} > 0.
\end{displaymath} Thus $u$ is not a constrained viscosity solution on $[0,\infty)$. 

Now we consider the case when $c_1 + c_2 \le 0$. Let $\phi(x) =  (c_1 +c_2)  + (\abs{c_1-c_2} + 2)x + (c_1+c_2-1)x^2$. Then we can verify  $(u-\phi)(x) \le (u-\phi)(0) =0$ for $x$ small and  $\phi'(x) = \abs{c_1-c_2} + 2 + 2(c_1+c_2-1) x$  and $\phi''(x) = 2(c_1+c_2-1) $. Then we compute 
\begin{displaymath}
\begin{aligned}
\min\set{\phi(0)-\phi''(0), \phi'(0)-1} & = \min\set{c_1+ c_2 - 2(c_1+c_2) + 2, \abs{c_1-c_2} + 2 -1} \\
  & = \min\set{2-(c_1+c_2), \abs{c_1-c_2} +  1} >0;
\end{aligned}\end{displaymath}
which again demonstrates that $u$ is not a constrained viscosity solution of \eqref{qvi-ex2} on $[0,\infty)$.  

One observes that any linear combination of $x+c$ and $c_1 e^x + c_2 e^{-x}$ can not be a constrained viscosity solution of \eqref{qvi-ex2} on $[0,\infty)$ either. In addition, functions of the form $u(x)= (x+c) I_{\set{x>a}} + (c_1 e^x + c_2 e^{-x})I_{\set{x\le a}}$ are not   constrained viscosity solution of \eqref{qvi-ex2} on $[0,\infty)$, where $a, c_1, c_2$ are appropriately selected  constants so that $u\in C^1([0,\infty))\cap C^2([0,\infty)-\set{a})$ and solves \eqref{qvi-ex2} in $(0,\infty)$. 

Finally we note that for the corresponding controlled process 
$X(t) =x+ \sqrt{ 2}W(t) $ and the reward functional $\ex_x\int_0^\infty e^{-t} dZ(t)$, $\mathcal A_0 =\emptyset$. The reason is that the Brownian motion $W$, starting from $0$, changes sign infinitely many times  and hence can not satisfy the state constraint in any time interval $[0,\e]$. \hfill $\Box$
\end{exm}

\begin{exm}\label{exm3}
In this example, we consider the system of coupled QVIs 
\begin{equation}
\label{qvi-ex3}
\begin{aligned}
\min\Big\{ru(x,\al)-\mu_\al x u'(x,\al) - \frac{1}{2}\sigma_\al^2 x^2 u''(x,\al)- \lambda_\al u(x,\al) + \lambda_\al u(x,3-\al), \\  u'(x,\al) - 1\Big\}=0, \ \ x\in (0,\infty), \ \al \in \set{1,2},
\end{aligned}\end{equation}
where for $\al=1,2$,   $\mu_\al, \sigma_\al$, and $ \lambda_\al >0$  are constants. Moreover, we assume $\mu_1,\mu_2$ satisfy $\mu_1  < r < \mu_2 \le    \frac{r\lambda_1 + (r-\mu_1)(r+ \lambda_2)}{r+ \lambda_1 -\mu_1 }$. 
 One can verify that the unique solution  to \eqref{qvi-ex3} in $(0,\infty)\times \M$
is \begin{equation}\label{soln-1d-ex} u(x,1)=x,\ \ \ u(x,2)=\frac{\lambda_2}{\lambda_2+r-\mu_2}x, \ \ \ x> 0. \end{equation}  
Moreover, one can easily verify that  $u(\cdot,\al), \al=1,2$ satisfy the subsolution property at the point $x=0$. Therefore $u$ is the unique constrained solution on $[0,\infty)\times \set{1,2}$.   

The corresponding controlled dynamic is given by the regime-switching geometric Brownian motion: 
\begin{displaymath}
dX(t) = \mu_{\al(t)} X(t) dt + \sigma_{\al(t)} X(t)dW(t) -dZ(t),
\end{displaymath} 
where $\set{\al(t), t\ge 0}$ is a two-state continuous-time Markov chain with generator $\begin{pmatrix}-\lambda_1 & \lambda_1 \\ \lambda_2 & -\lambda_2 \end{pmatrix}$. The objective is maximize the reward $J(x,\al,Z) = \ex_{x,\al} \int_0^\infty e^{-rt} dZ(t)$. Observe that $\mathcal A_{x,\al} \not= \emptyset$ for all $(x,\al) \in  [0,\infty) \times \set{1,2}$. Moreover, as demonstrated in \cite{Song-S-Z}, the value function $V(x,\al) = u(x,\al)$ for all $(x,\al) \in [0,\infty) \times \set{1,2}$, where $u$ is defined in \eqref{soln-1d-ex}. 
 \hfill $\Box$\end{exm}

Now let's present the main result of this section.

\begin{thm}\label{thm-viscosity}
	Assume  
$\mathcal A_{x,\al}\not=\emptyset$	and  that the value function $V(\cdot, \al)$ is  finite for each $(x,\al)\in \bar S \times \M$. 
	Then  $\V(x)=(V(x,1),\dots, V(x,m))'$ is a constrained viscosity solution of \eqref{qvi-equiv-form} on $\bar S \times \M$.
 \end{thm}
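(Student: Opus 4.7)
The plan is to establish the super- and sub-solution properties separately, leaning on Proposition \ref{lem-increasing} for gradient estimates and on the weak DPP (Proposition \ref{prop-DPP}) for PDE estimates.

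\textbf{Supersolution on $S \times \M$.} Fix $(\xz, \al_0) \in S \times \M$ and a test function $\vphi$ with $V_* - \vphi$ attaining a strict global minimum of $0$ at $(\xz, \al_0)$. The gradient inequality $\Di \vphi(\xz, \al_0) \geq f_i(\xz, \al_0)$ follows directly from \eqref{value-increasing}: choose $x_n \to \xz$ with $V(x_n, \al_0) \to V_*(\xz, \al_0)$, so for small $h > 0$, $V(x_n, \al_0) \geq h f_i(x_n, \al_0) + V(x_n - h e_i, \al_0) \geq h f_i(x_n, \al_0) + \vphi(x_n - h e_i, \al_0)$ (using $V \geq V_* \geq \vphi$ near $\xz$); passing $n \to \infty$ and $h \downarrow 0$ yields the bound. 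For the PDE inequality $F_{\al_0}(\xz, \V_*(\xz), D\vphi, D^2\vphi) \geq 0$, assume the contrary; then continuity of the data and lower-semicontinuity of $V_*$ yield $\eta, \rho > 0$ with $F_{\al_0} \leq -\eta$ on $\bar B_\rho(\xz)$. Construct an upper-semicontinuous $\tilde\vphi \leq V$ globally, agreeing with $\vphi$ on $B_{\rho/2}(\xz) \times \set{\al_0}$ (standard viscosity truncation exploiting strictness). Apply \eqref{DPP-eq2} with $Z \equiv 0$ and $\tau = \inf\set{t: X^{x_n, \al_0}(t) \notin B_{\rho/2}(\xz)} \wedge h$. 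The generalized It\^o formula on $e^{-rt}\vphi(X(t), \al(t))$, combined with the identity
\[
\op\vphi - r\vphi = -F_{\al_0}(\cdot, \V_*, D\vphi, D^2\vphi) + r[V_*(\cdot, \al_0) - \vphi(\cdot, \al_0)] + \sum_j q_{\al_0 j}[\vphi(\cdot, j) - V_*(\cdot, j)],
\]
produces $\ex[e^{-r\tau}\tilde\vphi(X(\tau), \al(\tau))] \geq \vphi(x_n, \al_0) + c\eta$ for some $c > 0$, contradicting $V(x_n, \al_0) \to \vphi(\xz, \al_0)$ via \eqref{DPP-eq2}.

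\textbf{Subsolution on $\bar S \times \M$.} Fix $(\xz, \al_0) \in \bar S \times \M$ and test $\vphi$ with $V^* - \vphi$ attaining a strict global maximum of $0$ at $(\xz, \al_0)$, and suppose for contradiction that both $F_{\al_0}(\xz, \V^*(\xz), D\vphi, D^2\vphi) > 0$ and $\Di\vphi(\xz, \al_0) - f_i(\xz, \al_0) > 0$ for every $i$. By continuity, both hold with uniform margin $\eta > 0$ on $\bar B_\rho(\xz) \cap \bar S$ at $\al = \al_0$. Take $x_n \in S$ with $x_n \to \xz$ and $V(x_n, \al_0) \to V^*(\xz, \al_0)$, and apply \eqref{DPP-eq1} at $(x_n, \al_0)$ with $\tau = \inf\set{t: X(t) \notin B_\rho(\xz)} \wedge h$. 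For each admissible $Z$, the It\^o expansion of $e^{-rt}\vphi(X(t), \al(t))$ (with $Z$-jumps handled via a mean-value estimate) shows that the gradient bound makes $\int_0^\tau e^{-rs}[f - D\vphi]\cdot dZ^c$ and each jump correction non-positive with margin $-\eta$, while the PDE bound (via the same rewriting of $\op\vphi - r\vphi$ as above, with $V^*$ in place of $V_*$) controls the $ds$-integrand. Combined with $V^*(X(\tau), \al(\tau)) \leq \vphi(X(\tau), \al(\tau))$, this yields uniformly in $Z$
\[
\ex\Big[\int_0^\tau e^{-rs} f \cdot dZ + e^{-r\tau} V^*(X(\tau), \al(\tau))\Big] \leq \vphi(x_n, \al_0) - c\eta.
\]
Taking $\sup_Z$, invoking \eqref{DPP-eq1}, and passing $n \to \infty$ gives $V^*(\xz, \al_0) \leq \vphi(\xz, \al_0) - c\eta$, the required contradiction.

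\textbf{Main obstacle.} The principal difficulty is the coupling term $\sum_j q_{\al_0 j}\xi_j$ in $F_{\al_0}$, which is precisely the source of the non-properness in $\xi$ flagged in the introduction. The It\^o generator $\op\vphi$ couples through $\vphi(\cdot, j)$ while $F_{\al_0}$ couples through $\V^*(\cdot, j)$ (subsolution) or $\V_*(\cdot, j)$ (supersolution), producing a residual $\sum_j q_{\al_0 j}[\vphi(\cdot, j) - U(\cdot, j)]$ (where $U$ denotes the appropriate envelope) whose sign is indefinite, as one summand has opposite sign to the others. This is handled by keeping $\tau$ of order $h$ so that the residual's integrated contribution is $O(h)$, comparable to the leading contribution $\eta\,\ex[\tau] \sim \eta h$; by the strictness of the extremum and a standard modification of $\vphi(\cdot, j)$ for $j \neq \al_0$ (which does not affect $D\vphi$ or $D^2\vphi$ at $(\xz, \al_0)$ and hence leaves $F_{\al_0}$ unchanged), one can make $|\vphi(\xz, j) - U(\xz, j)|$ small enough to absorb the residual into the $c\eta$ margin. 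The boundary case $\xz \in \partial S$ in the subsolution proof is accommodated by approximating from within $S$, where $\Ax \neq \emptyset$ is guaranteed by the standing hypothesis.
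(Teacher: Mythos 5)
Your supersolution argument takes a genuinely different (and valid) route from the paper's: you obtain the gradient inequality $D_{x_i}\vphi(\xz,\al_0)\ge f_i(\xz,\al_0)$ directly from the monotonicity estimate \eqref{value-increasing}, and then prove the PDE inequality by a contradiction using the weak DPP \eqref{DPP-eq2} with $Z\equiv 0$.  The paper instead argues constructively in Proposition~\ref{prop-supersoln}: it couples \eqref{DPP-eq2} with the generalized It\^o formula for a single family of controls $Z(t)\equiv\eta$ (a jump at $t=0$), and then extracts the PDE inequality by taking $\eta=0$ and the gradient inequalities by taking $\eta=\eta_i e_i$ and letting $\eta_i\downarrow 0$ after dividing by $h_m$.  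Both approaches work; the paper's is slightly more economical because a single It\^o/DPP comparison yields both inequalities, whereas yours requires the separate construction of a truncated global lower test function $\tilde\vphi$.  Your observation that one may modify $\vphi(\cdot,j)$ for $j\neq\al_0$ without affecting $F_{\al_0}$ (because $F_{\al_0}$ reads $\V_*(\xz,j)$, not $\vphi(\xz,j)$) is the right way to neutralise the coupling residual $\sum_{j\neq\al_0}q_{\al_0 j}[\vphi(\cdot,j)-V_*(\cdot,j)]$, and is close in spirit to how the paper handles the same issue.

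In the subsolution direction, however, there is a genuine gap.  Your outline reproduces the paper's Steps 1--3 of Proposition~\ref{prop-subsoln} (assume both margins $\ge\eta$, plug into \eqref{DPP-eq1}, run It\^o on $e^{-rs}\vphi$, use \eqref{Vstar} to control the jumps) and then asserts, ``uniformly in $Z$,''
\[
\ex\Big[\int_0^\tau e^{-rs}f\cdot dZ + e^{-r\tau}V^*(X(\tau),\al(\tau))\Big]\le \vphi(x_n,\al_0)-c\eta .
\]
But the margin you actually extract from It\^o is $\eta\,\ex\!\left[\int_0^{\tau-}e^{-rs}\big(ds+\one\cdot dZ(s)\big)+\la e^{-r\tau}\one\cdot\Delta Z(\tau)\right]$, and you never show why this quantity is bounded below away from $0$ uniformly over $Z\in\mathcal A_{x_m,\al_0}$.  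A priori a control could push $X$ out of $B_\e(\xz)$ almost instantaneously by continuous motion, making $\tau$ and the accumulated $\int_0^\tau e^{-rs}ds$ arbitrarily small; the uniform lower bound is precisely what prevents the $-c\eta$ contradiction from evaporating.  This is exactly claim \eqref{super-claim2} in the paper, and it is not a routine estimate: the paper devotes a separate Step~4 to it, introducing the auxiliary barrier $W(x,\al)=K_0(\abs{x-x_0}^2-\e^2)$ normalized so that $\abs{(\op-r)W}<1$ and $D_{x_i}W\ge -1$, then running a second It\^o identity to show that exiting $B_\e(\xz)$---whether by time, by continuous control, or by a jump---always forces the penalizing integral $\ge\kappa:=\tfrac34 K_0\e^2>0$.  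Without this barrier argument (or an equivalent), your proof does not close; you should state and prove a lower bound of the form \eqref{super-claim2} before invoking the contradiction with $\gamma_m\to 0$.
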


 The proof of Theorem \ref{thm-viscosity} is accomplished by the combination of Propositions \ref{prop-supersoln} and \ref{prop-subsoln}:  Proposition  \ref{prop-supersoln}  shows that $\V$ is a viscosity supersolution, while Proposition  \ref{prop-subsoln}   establishes that $\V$ is viscosity subsolution.

 \begin{prop}\label{prop-supersoln}
 The function $\V$ is a viscosity supersolution of \eqref{qvi-equiv-form} in $S\times \M$. That is, for any $(x_0,\al_0) \in S\times \M$ and any $C^2$ function $\phi(\cdot, \cdot)$ satisfying $\phi(x_0,\al_0)= V_*(x_0,\al_0)$ and that $\phi(x,\al) \le V_*(x,\al)$ for all $x$ in a neighborhood of $x_0$ and each $\al\in \M$, we have
 \begin{equation}\label{viscosity-sub}
 \min\set{(r-\op) \phi(\xalz), \min_{i=1,\dots, n} \set{D_{x_i} \phi(x_0,\al_0)-f_i(\xalz)}} \le 0.\end{equation}
\end{prop}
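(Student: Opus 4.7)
The plan is to verify, at $(\xz,\al_0)$, the supersolution inequality of \eqref{qvi-equiv-form}, i.e.,
\[
\min\set{F_{\al_0}\bigl(\xz,\V_*(\xz),D\phi(\xz,\al_0),D^2\phi(\xz,\al_0)\bigr),\ \min_{i=1,\dots,n}\set{D_{x_i}\phi(\xz,\al_0)-f_i(\xz,\al_0)}} \ge 0,
\]
by handling the two pieces of the min separately. In each step I would pass through a sequence $x_n\to\xz$ with $V(x_n,\al_0)\to V_*(\xz,\al_0)=\phi(\xz,\al_0)$, which lets us transfer inequalities from $V$ to $V_*$, and thence to $\phi$ via the touching condition $\phi\le V_*$ near $\xz$.

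For the gradient constraint $D_{x_i}\phi(\xz,\al_0)\ge f_i(\xz,\al_0)$, I would apply the lump-sum singular strategy $Z^\e(t)=\e e_i\one_{\set{t\ge 0}}$ from initial point $x$ near $\xz$. Proposition~\ref{lem-increasing} then gives $V(x,\al_0)\ge \e f_i(x,\al_0)+V(x-\e e_i,\al_0)$; taking liminf as $x\to\xz$ using lower semicontinuity of $V_*$ and continuity of $f_i$, substituting $\phi(\xz,\al_0)=V_*(\xz,\al_0)$ together with $\phi(\xz-\e e_i,\al_0)\le V_*(\xz-\e e_i,\al_0)$, then dividing by $\e$ and sending $\e\downarrow 0$, yields the bound.

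The PDE piece is more delicate because $F_{\al_0}$ sees $V_*(\xz,j)$ at the other regimes, whereas a naive It\^o expansion of $\phi$ for the regime-switching diffusion feeds into the jump part of the generator only $\sum_j q_{\al_0 j}\phi(\xz,j)$, strictly smaller when $\phi(\xz,j)<V_*(\xz,j)$ --- this is precisely the non-properness in $\xi$ flagged in the introduction. To bridge the gap, for each $\e>0$ I would construct a continuous (hence USC) test function $\tvphi^\e$ on $S\times\M$ with $\tvphi^\e(\cdot,\al_0):=\phi(\cdot,\al_0)$, and for each $j\ne\al_0$ a continuous $\tvphi^\e(\cdot,j)$ satisfying $\tvphi^\e(\cdot,j)\le V_*(\cdot,j)\le V(\cdot,j)$ globally and $\tvphi^\e(\xz,j)\ge V_*(\xz,j)-\e$. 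Existence of such pointwise under-approximations of the LSC function $V_*(\cdot,j)$ follows from the standard fact that a lower-semicontinuous function is the supremum of the continuous functions lying below it (equivalently, from a Moreau--Yosida regularization); moreover $\tvphi^\e\le V$ ensures admissibility in the weak DPP \eqref{DPP-eq2}.

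I would then apply \eqref{DPP-eq2} at each $x_n$ with $\varphi=\tvphi^\e$, trivial control $Z\equiv 0$, and stopping time $\tau_h:=h\wedge\tau_1$, where $\tau_1$ is the first jump time of $\al$. Decomposing the expectation by the events $\set{\tau_1>h}$ (where $\al\equiv\al_0$ and $\zeta$ is a pure diffusion in regime $\al_0$, so classical It\^o on $\phi(\cdot,\al_0)$ applies) and $\set{\tau_1\le h,\,\al(\tau_1)=j}$ (of probability $q_{\al_0 j}h+o(h)$, on which $\zeta(\tau_1)$ is within $O(\sqrt h)$ of $\xz$ so that $\tvphi^\e(\zeta(\tau_1),j)\ge V_*(\xz,j)-\e+o(1)$), passing $n\to\infty$, and collecting the leading $O(h)$ terms yields
\[
0 \ge h\bigl[-F_{\al_0}\bigl(\xz,\V_*(\xz),D\phi(\xz,\al_0),D^2\phi(\xz,\al_0)\bigr)-\e\,|q_{\al_0\al_0}|\bigr]+o(h).
\]
Dividing by $h$, sending $h\downarrow 0$, and then $\e\downarrow 0$ delivers $F_{\al_0}\ge 0$. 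The main obstacle throughout is exactly this coupling through the jump generator; the device of the pointwise-approximating test function $\tvphi^\e$, together with the first-jump decomposition of the expectation, is the key technical step.
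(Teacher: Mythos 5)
Your proof is correct, but it takes a noticeably different route from the paper's in both halves, and it is worth seeing how. For the gradient constraint you invoke Proposition~\ref{lem-increasing} directly and pass to liminf via the approximating sequence $x_n$; this is clean and arguably simpler than the paper, which instead feeds a constant lump-sum control $Z\equiv\eta_i e_i$ into the DPP \eqref{DPP-eq2} and extracts $D_{x_i}\phi - f_i \ge 0$ from the same It\^o expansion used for the PDE part. For the PDE inequality, however, you work harder than necessary. The paper applies \eqref{DPP-eq2} with the test function $\phi$ itself (suitably cut off outside $\lbar B_\e(x_0)$ so that $\phi\le V$ globally), $Z\equiv 0$, and the stopping time $\theta_m\wedge h_m$, and then invokes the \emph{generalized} It\^o formula for the regime-switching diffusion. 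That formula already produces the full coupled generator: the term $\sum_j q_{\al_0 j}\phi(X(s),j)$ appears automatically inside $\op\phi$, with no first-jump decomposition needed; dividing by $h_m$ and sending $h_m\downarrow 0$ yields $(r-\op)\phi(\xz,\al_0)\ge 0$ as stated in \eqref{viscosity-sub} (the $\le$ in the displayed proposition is a sign typo). You are aiming at something strictly stronger on its face, namely $F_{\al_0}(\xz,\V_*(\xz),D\phi,D^2\phi)\ge 0$ with $V_*$ rather than $\phi$ in the $\xi$ slot of the off-diagonal coupling; this is indeed what Definition~\ref{d-visc} asks for, and your construction of the continuous under-approximants $\tvphi^\e(\cdot,j)\le V_*(\cdot,j)$ with $\tvphi^\e(\xz,j)\ge V_*(\xz,j)-\e$, together with the first-jump decomposition of $\ex[e^{-r\tau_h}\tvphi^\e(X(\tau_h),\al(\tau_h))]$, delivers it correctly (you should also stop at the exit time of a small ball around $x_0$ so the state constraint is respected, as the paper does with $\theta_m$, but this only contributes $o(h)$). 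The simpler reconciliation, which is implicit in the paper's formulation, is to note that the two versions are equivalent: given any admissible $\phi$ and $\e>0$, replace $\phi(\cdot,j)$ for $j\ne\al_0$ by the constant $V_*(\xz,j)-\e$ on a small neighborhood (this is still a valid test function because $V_*(\cdot,j)$ is LSC) and apply \eqref{viscosity-sub}; since $q_{\al_0 j}\ge 0$ for $j\ne\al_0$, letting $\e\downarrow 0$ yields $F_{\al_0}(\xz,\V_*(\xz),D\phi,D^2\phi)\ge 0$. So your worry that ``non-properness'' of $F_{\al_0}$ in $\xi$ blocks the naive It\^o argument is not quite right at the level of \emph{existence}: the sign of $q_{\al_0 j}$ off the diagonal actually helps here (it makes the $\phi$-form of the inequality the \emph{weaker} one, hence implied by the DPP, and the passage to the $V_*$-form is free). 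Non-properness does bite, but in the comparison/uniqueness proof (Theorem~\ref{thm-comparison}), not in Proposition~\ref{prop-supersoln}.
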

  
  \begin{proof}
   By the definition of $V_*(\xalz)$, there exists a sequence $\set{x_m} \subset \rr_+^n$ such that 
 \begin{equation}\label{eq-xm-x0}
x_m \to x_0, \text{ and } V(x_m,\al_0) \to V_*(\xalz), \text{ as }m \to \infty.
\end{equation}
This, together with the continuity of $\phi$, implies that  
 \begin{displaymath}
\gamma_m:= V(x_m,\al_0) - \phi(x_m,\al_0) \to 0, \text{ as } m \to \infty.
\end{displaymath}
 Let $B_\e(x_0):=\set{x\in \rr^n: \abs{x-x_0} < \e}$, where $\e>0$ is sufficiently small so that (i) $\lbar B_\e(x_0) \subset S$ and (ii) $\phi(x,\al)
\le V_*(x,\al)$ for all $(x,\al)\in \lbar B_\e(x_0)\times \M$, where $\lbar B_\e(x_0)= \set{x\in \rr^n: \abs{x}\le \e}$ denotes the closure of $ B_\e(x_0)$.
Choose $Z$ such that $Z(0-)=0$ and $Z(t)=\eta $ for all $t \ge 0$, where $0\le |\eta| < \e/2$. Then thanks to \eqref{eq-xm-x0}, $Z\in \mathcal A_{x_m,\al_0}$ for $m$ sufficiently large. 
Let $ X\cd=X^{x_m,\al_0}(\cdot; Z)$ be the corresponding controlled  process with initial condition $(x_m,\al_0)$ and control strategy $Z\cd$.
Put \bed \theta_m:= \inf\set{t \ge 0:  X(t) \notin B_{\e }(x_0)}.\eed
Let $\{h_m\}$ be a strictly positive sequence such that 
\begin{displaymath}
h_m\to 0 \text{ and } \frac{\gamma_m}{h_m} \to 0 \text{ as } m \to \infty. 
\end{displaymath}
Note that the chosen control strategy $Z$ guarantees that $ X\cd$ has at most one jump at $t=0$ and remains continuous on $(0,\theta_m]$.
This, together with  the choice of $\e$, implies that 
$ X(t) \in \lbar B_\e(x_0)$ for all $0\le t \le \theta_m$. 
Since $\phi \le V_* \le V$, we can apply the dynamic programming principle \eqref{DPP-eq2} to obtain 
\begin{equation}
\label{DPP-V-xm} 
\begin{aligned}
V(x_m,\al_0) \ge &\  \ex\left[\int_0^{\theta_m\wedge h_m}\!\! e^{-rs} f(X(s-),\al(s-))\cdot dZ(s) \right] \\ &  + \ex\left[e^{-r(\theta_m \wedge h_m)} \phi(X(\theta_m\wedge h_m), \al(\theta_m\wedge h_m))\right].
\end{aligned}
\end{equation}
On the other hand, It\^{o}'s formula yields 
\begin{equation}
\label{super-ito}
\begin{aligned}
\phi(x_m,\al_0)  =\  & \ex \left[ e^{-r(\theta_m \wedge h_m)} \phi(X(\theta_m\wedge h_m), \al(\theta_m\wedge h_m)) \right] \\ 
 & + \ex\left[ \int_0^{\theta_m \wedge h_m} e^{-rs}(r-\op) \phi(X(s),\al(s))ds\right] \\  &  + \ex\left[  \int_0^{\theta_m \wedge h_m}  e^{-rs} D\phi(X(s),\al(s))\cdot dZ^c(s) \right]\\ 
 & - \ex\left[\sum_{0\le s \le \theta_m \wedge h_m} e^{-rs}[\phi(X(s),\al(s-))-\phi(X(s-),\al(s-))]\right],
\end{aligned}\end{equation}
where in the above, we have used the fact that 
$$ \ex\left[\int_0^{\theta_m\wedge h_m} e^{-rs}  D\phi(X(s),\al(s)) \cdot \sigma(X(s),\al(s))dW(s)\right] =0.$$
A combination of \eqref{DPP-V-xm} and \eqref{super-ito} yields 
\begin{equation}\label{sub-ineq3}
\begin{aligned}
\gamma_m = &\  V(x_m,\al_0) - \phi(x_m,\al_0) \\
               \ge &\  \ex\left[\int_0^{\theta_m\wedge h_m}\!\! e^{-rs} \left(f(X(s-),\al(s-))\cdot dZ(s) -  
                (r-\op) \phi(X(s),\al(s))ds\right)\right]
               \\ & -    \ex\left[  \int_0^{\theta_m \wedge h_m}  e^{-rs} D\phi(X(s),\al(s))\cdot dZ^c(s) \right] \\ 
                & + \ex\left[\sum_{0\le s \le \theta_m \wedge h_m} e^{-rs}[\phi(X(s),\al(s-))-\phi(X(s-),\al(s-))]\right].
\end{aligned}\end{equation}
 Now let $\eta=0$, i.e., $Z(t)\equiv 0$ for any $t\ge 0$. Then \eqref{sub-ineq3} can be rewritten as
  \bed  \begin{aligned} \frac{\gamma_m}{h_m}& \ge - \frac{1}{h_m} \ex \int_0^{\theta_m\wedge h_m} e^{-r s} (r- \op) \phi( X(s),\al(s))ds
    \\ & = - \frac{1}{h_m}   \ex \int_0^{ h_m} e^{-r s} (r-\op) \phi( X(s),\al(s))I_{\set{s \le \theta_m}}ds.
   \end{aligned}\eed
      Note that as $m\to \infty$ and hence $h_m \to 0$,  by the right continuity of the trajectory $(X(s),\al(s))$, $e^{-r s} (r- \op) \phi( X(s),\al(s))I_{\set{s \le \theta_m}} \to (r- \op) \phi(\xalz)$ a.s.  for $s\in [0,h_m]$. 
  Thus by virtue of the bounded convergence theorem, we have
  \beq\label{sub-ineq4} (r-\op) \phi(\xalz) \ge 0.\eeq

  On the other hand, if we choose $\eta=\eta_i e_i $ with $0< \eta_i< \e/2$ and
   $e_i=(0,\dots,1, \dots, 0)'$ being the $i$th unit vector,
   $i=1,\dots, n$, then (\ref{sub-ineq3})
  reduces to
  \bed \ga_m \ge  - \ex \int_0^{\theta_m\wedge h_m} e^{-r s} (r- \op) \phi(X(s),\al(s))ds  + f_i(x_m,\al_0) \eta_i + \phi(x_m-\eta,\al_0) -\phi(x_m,\al_0) .\eed
  Now sending $m\to \infty$, we have \bed f_i(\xalz) \eta_i + \phi(x_0-\eta,\al_0) -\phi(\xalz) \le 0.\eed
  Finally, dividing the above inequality by $\eta_i$ and letting $\eta_i \to 0$ lead to 
  \beq\label{sub-ineq5} D_{x_i}\phi(\xalz)- f_i(\xalz)   \ge 0, \ \ i=1,\dots, n. \eeq
  Now (\ref{viscosity-sub}) follows from a combination of (\ref{sub-ineq4}) and (\ref{sub-ineq5}).
\end{proof}

\begin{prop}\label{prop-subsoln} 
The function $\V$ is  a viscosity subsolution of \eqref{qvi-equiv-form} in $\bar S \times \M$.
   That is, for any $(\xalz) \in \bar S \times \M $ and any $\varphi \in C^2$ such that $\varphi(\xalz)=V^*(\xalz)$ and that $\varphi(\xal)\ge V^*(\xal)$ for $x\in\bar S$ in a neighborhood of $x_0$ and each $\al\in \M$,
   we have
   \begin{equation}\label{viscosity-super}
   \min\set{(r-\op) \varphi(\xalz),\min_{i=1,\dots,n}\set{D_{x_i}\varphi(\xalz)-f_i(\xalz)}} \le 0.
   \end{equation}
  \end{prop}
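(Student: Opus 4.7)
The plan is to argue by contradiction, running the analogue of the proof of Proposition~\ref{prop-supersoln} but driven by the upper DPP estimate~\eqref{DPP-eq1} in place of the lower one, and handling exit jumps through~\eqref{Vstar}. Suppose the conclusion~\eqref{viscosity-super} fails, so that
$$\mu := \min\bigl\{(r-\op)\varphi(\xalz),\; \min_i\bigl(D_{x_i}\varphi(\xalz) - f_i(\xalz)\bigr)\bigr\} > 0.$$
Continuity of $\varphi(\cdot,j)$ for each $j\in\M$, together with continuity of $b,\sigma,f$, lets me choose $\epsilon,\delta>0$ such that on $\bar B_\epsilon(\xz)\cap\bar S$ the pointwise bounds $(r-\op)\varphi(x,\al_0)\ge\delta$, $D_{x_i}\varphi(x,\al_0)-f_i(x,\al_0)\ge\delta$, and $\varphi(x,\al)\ge V^*(x,\al)$ hold for every $i$ and every $\al\in\M$. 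By the definition of $V^*$, I pick $\{x_m\}\subset\bar S$ with $x_m\to\xz$ and $V(x_m,\al_0)\to V^*(\xalz)=\varphi(\xalz)$, so that $\gamma_m:=V(x_m,\al_0)-\varphi(x_m,\al_0)\to 0$.

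Fix $h>0$ small. For each $m$, Proposition~\ref{prop-DPP} provides $Z_m\in\A_{x_m,\al_0}$ that makes~\eqref{DPP-eq1} tight to within $\tfrac{1}{m}$ with $\tau=\tau_m:=\theta_m\wedge h$, where $\theta_m$ is the first time $X_m=X^{x_m,\al_0}(\cdot;Z_m)$ exits $B_\epsilon(\xz)$ or $\al$ jumps away from $\al_0$. By construction $X_m(\tau_m-)\in\bar B_\epsilon(\xz)$; the only way $X_m(\tau_m)\notin\bar B_\epsilon(\xz)$ is through an exit-jump of $Z_m$ at $\tau_m$ (including the degenerate case $\tau_m=0$ coming from an oversized initial jump). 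In that scenario I split $\Delta Z_m(\tau_m)=\Delta_1+\Delta_2$, both components nonnegative, with $X_m(\tau_m-)-\Delta_1\in\partial\bar B_\epsilon(\xz)$; then~\eqref{Vstar} combined with the monotonicity $f(y,\al)\ge f(x,\al)$ for $y\le x$ gives
$$f(X_m(\tau_m-),\al_0)\cdot\Delta Z_m(\tau_m) + V^*(X_m(\tau_m),\al_0) \le f(X_m(\tau_m-),\al_0)\cdot\Delta_1 + \varphi(X_m(\tau_m-)-\Delta_1,\al_0),$$
so the terminal $V^*$-value may be replaced by $\varphi$ evaluated at a point of $\bar B_\epsilon(\xz)$ without enlarging the DPP right-hand side. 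With this reduction in place, I apply to $e^{-rs}\varphi(X_m(s),\al(s))$ the same generalized It\^o expansion as in the proof of Proposition~\ref{prop-supersoln} on $[0,\tau_m]$, rewrite the $\varphi$-jump increments through the fundamental theorem of calculus, and note that convexity of $\bar B_\epsilon(\xz)$ keeps each interpolant $X_m(s-)-\theta\Delta Z_m(s)$ inside the ball. Combining the DPP bound with this expansion, the pointwise strict estimates on $(r-\op)\varphi$ and $D_{x_i}\varphi-f_i$, and the monotonicity of $f$ yields
$$\gamma_m \;\le\; -\delta\,\ex\!\left[\int_0^{\tau_m}\! e^{-rs}\,ds + \int_{[0,\tau_m]} e^{-rs}\,d|Z_m|_1(s)\right] + \frac{1}{m}.$$

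To close the argument I need a uniform positive lower bound on the bracketed quantity $A_m$. I would split into cases: if $\ex[|Z_m|_1(\tau_m)]\ge\epsilon/2$ for infinitely many $m$, then the jump integral alone gives $A_m\ge e^{-rh}\epsilon/2$ along that subsequence; otherwise $|Z_m|_1(\tau_m)$ is uniformly small in expectation, in which case boundedness of $b,\sigma$ on $\bar B_\epsilon(\xz)$ and standard SDE moment estimates keep $X_m$ close to $x_m$ with high probability, so $\tau_m$ is bounded below in expectation by $h\wedge T_1$, where $T_1$ is the first jump time of $\al$ and $\ex[T_1]=1/|q_{\al_0\al_0}|>0$. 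Either way $A_m\ge c>0$ along a subsequence, forcing $\gamma_m\le -\delta c+\frac{1}{m}$, which contradicts $\gamma_m\to 0$. The technically most delicate step is the \eqref{Vstar}-based jump absorption in the middle paragraph: one must ensure that the replacement scheme produces a legitimate bound even when $Z_m(0)$ can push $X_m$ far outside $\bar B_\epsilon(\xz)$, so that the subsequent It\^o analysis inside the ball becomes applicable; the remainder of the argument is a direct reversal of the signs encountered in the proof of Proposition~\ref{prop-supersoln}.
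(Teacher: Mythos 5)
Your proof follows the same overall strategy as the paper's argument: argue by contradiction, use~\eqref{DPP-eq1} to extract a near-optimal control, absorb the exit jump through~\eqref{Vstar} and the monotonicity of~$f$, then run It\^{o} together with the strict pointwise bound to conclude $\gamma_m\le -\delta\,A_m + o(1)$ for some nonnegative quantity~$A_m$. The decisive step is to show $A_m$ is bounded below uniformly in~$m$, and here the two arguments diverge --- and yours has a genuine gap.

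The issue is which controls actually carry the $\delta$ coefficient. Once you split $\Delta Z_m(\tau_m)=\Delta_1+\Delta_2$ and use~\eqref{Vstar} to replace $f\cdot\Delta Z_m(\tau_m)+V^*(X_m(\tau_m))$ by $f\cdot\Delta_1+\varphi(x_\lambda)$ with $x_\lambda=X_m(\tau_m-)-\Delta_1\in\partial B_\epsilon(\xz)$, the $\Delta_2$ portion of the exit jump has been eliminated without any slack, because~\eqref{Vstar} is an exact inequality with no $\delta$ margin. The strict inequalities $D_{x_i}\varphi-f_i\ge\delta$ and $(r-\op)\varphi\ge\delta$ only earn you the $\delta$ coefficient on the controls exercised \emph{inside} $\bar B_\epsilon(\xz)$, namely $\int_0^{\tau_m-}\!\one\cdot dZ_m$ and $\one\cdot\Delta_1$, together with the $\int_0^{\tau_m}\!e^{-rs}\,ds$ term. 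So the correct quantity is
\begin{equation*}
A_m=\ex\!\left[\int_0^{\tau_m-}\!e^{-rs}\bigl(ds+\one\cdot dZ_m(s)\bigr)+e^{-r\tau_m}\,\one\cdot\Delta_1\right],
\end{equation*}
not $\ex[\int_0^{\tau_m}e^{-rs}\,ds + \int_{[0,\tau_m]}e^{-rs}\,d|Z_m|_1(s)]$, which includes $\Delta_2$. This breaks your Case 1: the event $\ex[|Z_m|_1(\tau_m)]\ge\epsilon/2$ can be produced almost entirely by a huge $\Delta_2$ jump past the boundary while $\Delta_1$ and $Z_m(\tau_m-)$ are negligible, so you do \emph{not} get $A_m\ge e^{-rh}\epsilon/2$. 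The dichotomy you set up isn't a dichotomy on the relevant quantity. Your Case~2 is also only sketched: an expectation bound $\ex[|Z_m|_1(\tau_m)]<\epsilon/2$ does not rule out a positive-probability set where the control immediately drives $X_m$ to $\partial B_\epsilon$, and converting the expectation bound to a pathwise one (via Markov's inequality) would require a sharper threshold than $\epsilon/2$ plus a quantitative lower bound on the expected exit time of a ball by the uncontrolled diffusion, none of which is spelled out.

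The paper avoids all of this with the auxiliary function $W(x,\al)=K_0(|x-x_0|^2-\epsilon^2)$ in Step~4: $W$ vanishes on $\partial B_\epsilon(\xz)$, is bounded above by $-\tfrac34 K_0\epsilon^2<0$ at $x_m$, satisfies $|(\op-r)W|<1$ and $D_{x_i}W\ge -1$. A single It\^{o} estimate plus a one-line bound on the $\Delta_1$ jump increment then yield $A_m\ge \tfrac34K_0\epsilon^2=:\kappa>0$ for \emph{every} admissible control simultaneously, with no case split and no probabilistic SDE estimates. That uniform bound is what your argument is missing. Minor additional points: your near-optimal $Z_m$ is chosen against a stopping time $\tau_m$ that itself depends on $Z_m$, which requires the control-dependent version of the weak DPP (the paper sidesteps this by proving its estimate for every $Z$ and taking the supremum at the end); and truncating $\theta_m$ at the first regime change discards the part of the paper's argument (claim~\eqref{super-claim1}) that handles $\al(\theta_m-)\ne\al(\theta_m)$ --- your regime-change stopping is a legitimate simplification, but you should flag it.
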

  
  \begin{proof}
 Suppose on the contrary that (\ref{viscosity-super})   was wrong, then there would exist some $(\xalz)\in \bar S\times \M$, a $\varphi\in C^2$ with $(V^*-\vphi)(x,\al) \le (V^*-\vphi)(x_0,\al_0) =0$, and a constant $A>0$ such that
 \begin{equation}\label{vis-super-contra}
\min\set{(r-\op) \varphi(\xalz),\min_{i=1,\dots,n}\set{D_{x_i}\varphi(\xalz)-f_i(\xalz)}}  \ge 2A > 0.
 \end{equation}

 In what follows, we will derive a contradiction to \eqref{vis-super-contra}. This is achieved in several steps. First we use the generalized It\^o formula and \eqref{vis-supersoln-contra} to obtain \eqref{eq-vphi-new}, 
 from which we obtain \eqref{Vm-estimate-eq1} and \eqref{super-ineq1}. Next, detailed analysis using the monotonicity of the functions $V^*$ and $f$ leads
 to (\ref{super-ineq5}). Then we claim in (\ref{super-claim2}) that the last term in (\ref{super-ineq5}) is bounded below by a positive constant, from which, with
 the aid of dynamic programming \eqref{DPP-eq1}, we obtain a contradiction to (\ref{vis-super-contra}).  The final step of the proof is devoted to the proof of (\ref{super-claim2}).
 
{\em Step 1.}  As in the proof of Proposition \ref{prop-supersoln}, let $\set{x_m} \subset \bar S$ be a sequence such that 
 $$x_m \to x_0, \text{ and } V(x_m,\al_0) \to V^*(x_0,\al_0)Ê\text{ as } m \to \infty, $$
 and 
 $$\ga_m : = V(x_m,\al_0) - \vphi(x_m,\al_0) \to 0, \text{ as } m \to \infty. $$
Choose $m$ sufficiently large so that $\abs{x_m-x_0} < \e/2$. Fix some $Z\in \mathcal A_{x_m,\al_0}$ and let $ X\cd = X^{x_m,\al_0}(\cdot, Z)$ be the corresponding controlled process. 
Define $B_\e(x_0):= \set{x\in \bar S: |x-x_0| < \e}$,   where
  $\e>0$ is small enough so that (i) 
   $ \varphi(x,\al)\ge V^*(x,\al) \ge V(x,\al)$ for all $(x,\al) \in \lbar B_\e(x_0)\times \M$, and (ii)
   \begin{equation}\label{vis-supersoln-contra}
 \min\set{(r-\op) \varphi(x,\al),\min_{i=1,\dots,n}\set{D_{x_i}\varphi(x,\al)-f_i(x,\al)}} \ge  A >0, \ \ \forall (x,\al) \in \lbar B_\e(x_0)\times \M.
 \end{equation}

 Let $\theta_m:=\inf\set{t\ge 0:  X(t) \notin B_\e(\xz)}$.  
 Then for any $t>0$, we have from the generalized It\^o formula that 
 \begin{displaymath}
\begin{aligned}
\varphi(x_m,\al_0)  = &\  \ex \left[e^{-r (t\wedge \theta_m)}\varphi( X( t\wedge \theta_m-),\al( t\wedge \theta_m-))+ \int_0^{ t\wedge \theta_m-} e^{-rs} (r- \op) \varphi(X(s),\al(s))ds \right]
  \\ &  
 + \ex\left[ \int_0^{ t\wedge \theta_m-} e^{-rs} \sum_{i=1}^n D_{x_i} \varphi( X(s),\al(s))dZ^c_i(s)\right]\\ &  - \ex\left[\sum_{0\le s <  t\wedge \theta_m}  e^{-rs}\left[\varphi(X(s),\al(s-))-\varphi(X(s-),\al(s-))\right]\right].
\end{aligned}
\end{displaymath}
  Note that \bed \begin{aligned}& \varphi(  X(s),\al(s-))-\varphi(  X(s-),\al(s-)) \\ & \ \ = \sum^n_{i=1} (  X_i(s)-  X_i(s-)) D_{x_i}\varphi(  X(s-)+z(  X(s)-  X(s-)),\al(s-))\\ &\ \ = - \sum^n_{i=1} \Delta Z_i(s) D_{x_i}\varphi(  X(s-)+z(  X(s)-  X(s-)),\al(s-)) \end{aligned} \eed for some $z\in[0,1]$.
  But by virtue of \eqref{vis-supersoln-contra}, for all $0\le s <t\wedge \theta_m$, we have
  \bed D_{x_i}\varphi(  X(s-)+z(  X(s)-  X(s-)),\al(s-)) \ge f_i (  X(s-)+z(  X(s)-  X(s-)),\al(s-))+ A.\eed
  Further, since $  X(s) \le   X(s-)+z(  X(s)-  X(s-)) \le   X(s-)$ and that $f_i(\cdot,\al)$ is non-increasing, we have
  \bed  f_i (  X(s-)+z(  X(s)-  X(s-)),\al(s-)) \ge f_i(  X(s-),\al(s-)).\eed
 Then using  \eqref{vis-supersoln-contra} again, we obtain
 \begin{equation}\label{eq-vphi-new}
\begin{aligned}
\varphi(x_m,\al_0)  \ge &\  \ex \left[e^{-r (t\wedge \theta_m)}\varphi( X( t\wedge \theta_m-),\al( t\wedge \theta_m-))
                    + \int_0^{ t\wedge \theta_m-} e^{-rs} A( ds + \one \cdot dZ(s))\right] \\ 
                      & \ \ + \ex\left[ \int_0^{ t\wedge \theta_m-} e^{-rs}  f(X(s-),\al(s-)) \cdot dZ(s) \right],
\end{aligned}
\end{equation} where $\one=(1,\dots, 1)'$.
Now letting $t \to \infty$  in \eqref{eq-vphi-new},  it follows that  on the set $\set{\theta_m= \infty}$, we have 
 \begin{equation}\label{Vm-estimate-eq1}
 \begin{aligned}
V(x_m,\al_0)& = \vphi(x_m,\al_0) + \gamma_m  \\ 
                    & \ge \ex\left[ \int_0^{ \infty} e^{-rs}  f(X(s-),\al(s-)) \cdot dZ(s) \right] + \frac{A}{r} + \gamma_m.
\end{aligned}
\end{equation}

{\em Step 2.} On the set $\set{\theta_m < \infty}$, we have by letting $t\to \infty$ in \eqref{eq-vphi-new} that 
 \beq\label{super-ineq1} \begin{aligned}
\varphi(x_m,\al_0) \ge 
  &\  \ex \left[e^{-r\theta_m}\varphi(  X(\theta_m-),\al(\theta_m-))+  \int_0^{\theta_m-}  e^{-rs}  f(  X(s),\al(s)) \cdot dZ(s) \right]
 \\ &   + A \ex\left[ \int_0^{\theta_m-}  e^{-rs} (ds+ 
 \one \cdot dZ(s)) \right].
   \end{aligned}\eeq 
 Note that $  X(\theta_m) \le   X(\theta_m-)$ and $  X(\theta_m-) \in B_\e(\xz)$. Thus there exists some $\lambda\in [0,1]$ such that
  $$x_\la:=   X(\theta_m-) + \la (  X(\theta_m)-  X(\theta_m-)) =   X(\theta_m-) - \la \Delta Z(\theta_m) \in \partial B_\e(\xz).$$
  Moreover, $  X(\theta_m) \le x_\la \le   X(\theta_m-)$.
  Note that $$ \begin{aligned} \varphi & (  X(\theta_m-),\al(\theta_m-)) - \varphi(x_\la,\al(\theta_m-)) \\ & =
          (X(\theta_m-)-x_\la) \cdot D\varphi(  X(\theta_m- )+ z(  X(\theta_m)-x_\la),\al(\theta_m-))  \\ & = \la \Delta  Z(\theta_m) \cdot D\varphi(  X(\theta_m- )+ z(  X(\theta_m)-x_\la),\al(\theta_m-)) .\end{aligned}$$
  But (\ref{vis-supersoln-contra}) and the monotonicity of $f_i(\cdot, \al)$ imply that
   $$\begin{aligned}  D_{x_i} \varphi(  X(\tha_m- )+ z(  X(\tha_m)-x_\la),\al(\tha_m-)) & \ge f_i(  X(\tha_m- )+ z(  X(\tha_m)-x_\la),\al(\tha_m-)) +A\\ & \ge f_i(  X(\tha_m- ),\al(\tha_m-))+A.\end{aligned}$$
   This, together with the fact that $\Delta Z_i(\tha_m)\ge 0$, leads to
   \beq\label{super-ineq2} \varphi(  X(\theta_m-),\al(\theta_m-)) - \varphi(x_\la,\al(\theta_m-)) \ge \la \Delta  Z(\tha_m) \cdot \( f(  X(\tha_m- ),\al(\tha_m-))+A\one\).\eeq
   Combing (\ref{super-ineq1}) and  (\ref{super-ineq2}), we obtain
   \beq\label{super-ineq3}\begin{aligned}
   V(x_m, \al_0) & = \varphi(x_m, \al_0) + \ga_m
   \\ & \ge  \ex  \left[ \int_0^{\theta_m-}  e^{-rs}  f(  X(s-),\al(s-)) \cdot dZ(s) 
   + A  \int_0^{\theta_m-}  e^{-rs} \(ds+
   \one \cdot dZ(s)\) \right] 
   \\ &\ \  + \ex\left[ \la e^{-r\tha_m}  \Delta Z(\tha_m) \cdot f(  X(\tha_m- ),\al(\tha_m-))  \right]  
   \\ & \ \ + \ex\left[   e^{-r\tha_m} \(\varphi(x_\la, \al(\tha_m-))+ \la A \Delta Z(\theta_m) \cdot \one\)\right]  + \ga_m.
   \end{aligned} \eeq

     Note that $x_\la \in \lbar B_\e(\xz)$ and hence  $\varphi(x_\la, \al(\theta_m-)) \ge V^*(x_\la, \al(\theta_m-))$.
   On the other hand, since $  X(\theta_m)\le x_\la \le X(\theta_m-)$, it follows  from \eqref{Vstar} and the monotonicity of $f$ that
   \beq\label{super-ineq4}\begin{aligned}
       V^*(x_\la,\al(\theta_m-))&  \ge V^*(  X(\theta_m),\al(\theta_m-))+  (x_\la-  X(\theta_m)) \cdot  f(x_\la,\al(\theta_m-)) 
    \\ & \ge V^*(  X(\theta_m),\al(\theta_m-)) + (1-\la)  \Delta Z(\theta_m) \cdot f(  X(\theta_m-),\al(\theta_m-)).
   \end{aligned} \eeq
A similar argument as that in \cite{Song-S-Z} yields that   \beq\label{super-claim1}\ex\left[ e^{-r\theta_m} V^*(  X(\theta_m), \al(\theta_m-)) \right]= \ex \left[e^{-r\theta_m} V^*( X(\theta_m), \al(\theta_m))\right].\eeq
   Now put (\ref{super-ineq4}) and (\ref{super-claim1}) into  (\ref{super-ineq3}) and  we obtain
    \beq\label{super-ineq5}\begin{aligned}
   V(x_m,\al_0) &  \ge  \ex  \left[\int_0^{\theta_m-}  e^{-rs}  f( X(s-),\al(s-)) \cdot dZ(s) 
     +  e^{-r\theta_m} V^*( X(\theta_m), \al(\theta_m))\right]
   \\ &\ \  + A \ex      \left[\int_0^{\theta_m-}  e^{-rs}\( ds
 +  \one \cdot dZ(s)\)\right] + \ga_m \\ & \ \   +  (1-\la)\ex \left[e^{-r\theta_m} \Delta Z(\theta_m)\cdot f(  X(\theta_m-),\al(\theta_m-))  \right] \\
  & \ \ +  \la \ex\left[ e^{-r \theta_m} \Delta Z(\theta_m) \cdot \(f( X(\theta_m- ),\al(\theta_m-))+A \one\)\right]
  \\ & =  \ex \left[ \int_0^{\theta_m}  e^{-rs}  f(  X(s-),\al(s-))\cdot dZ(s)  +   e^{-r\theta_m} V^*( X(\theta_m), \al(\theta_m))\right]\\
  & \ \ +   A \ex      \left[\int_0^{\theta_m-}  e^{-rs}\( ds+ 
   \one\cdot dZ(s)\) + \la  e^{-r\theta_m}   \one \cdot  \Delta Z(\theta_m) \right] + \ga_m.
    \end{aligned} \eeq

  We   now claim that for  some constant $\kappa >0 $  that does not depend on $m$, we have
  \begin{equation}\label{super-claim2}\ex      \left[\int_0^{\theta_m-}  e^{-rs} \(ds
 +  \one \cdot dZ(s)\) + \la   e^{-r\theta_m}   \one \cdot \Delta Z(\theta_m) \right] \ge \kappa. \end{equation} .

{\em Step 3.}   Assume   (\ref{super-claim2}) for the moment. Then  (\ref{super-ineq5}) can be rewritten as
   \beq\label{super-ineq6}\begin{aligned}
   V(x_m,\al_0) &  \ge  \ex \left[ \int_0^{\theta_m}  e^{-rs}  f(  X(s-),\al(s-))\cdot dZ(s) 
     +   e^{-r\theta_m} V^*(  X(\theta_m), \al(\theta_m))\right] + A \kappa + \gamma_m.  \end{aligned} \eeq
  Combining \eqref{Vm-estimate-eq1} and   \eqref{super-ineq6}, and then taking supremum 
  over $Z\in \mathcal A_{x_m,\al_0}$,
      it follows  from the weak dynamic programming principle  \eqref{DPP-eq1} that
     \bed  V(x_m,\al_0) \ge  V(x_m,\al_0) + \frac{A}{r}\wedge A\kappa + \ga_m >  V(x_m,\al_0),\eed
    for $m$ sufficiently large.  This is a contradiction. So we must have (\ref{viscosity-super}) and hence $V$ is a viscosity subsolution of (\ref{qvi-equiv-form}).

{\em Step 4.}      Now it remains to show  (\ref{super-claim2}). To this end, we consider the function $\wdt W(x,\al):= \abs{x-x_0}^2 -\e^2$ for $(x,\al)\in B_\e(\xz)\times \M$. Then it follows that
     \bed (\op-r) \wdt W(x,\al)= 2 ( x-\xz) \cdot b(x,\al) + {1\over 2}\tr(2I \sigma(x,\al)\sigma'(x,\al)) -r ( \abs{x-x_0}^2 -\e^2).\eed
 Since $\wdt W$, $b$, and $\sigma$ are continuous, and $\M$ is a finite set, it is obvious  that
 $$| (\op-r) \wdt W(x,\al)| \le K < \infty$$ for some positive constant $K$. Now let $K_0:=\frac{1}{2\e+ K}$ and define
 $W(x,\al)= K_0\wdt W(x,\al)$ for  $(x,\al)\in B_\e(\xz)\times \M$.
 Then it follows immediately that \beq\label{claim2-ineq1}\abs{ (\op-r)   W(x,\al)} < 1, \ \ (x,\al)\in B_\e(\xz)\times \M. \eeq
 Moreover, we have
 \beq\label{claim2-ineq2} D_{x_i} W(x,\al) = 2K_0 ( x-\xz) \cdot e_i  \ge -1,\eeq
 where $e_i= (0,\dots, 1,\dots,0)'$ denotes the $i$th unit vector. Let $x_m$, $\theta_m$, $Z\in \mathcal A_{x_m,\al_0}$ etc. as before. 
Using \eqref{claim2-ineq1},  \eqref{claim2-ineq2}, and generalized It\^o's formula, detailed computations similar to those in Step 1 yield 
\beq\label{claim2-ineq4}  \ex  \left[e^{-r\theta_m} W(  X(\theta_m-),\al(\theta_m-))\right] - W(x_m,\al_0) 
   \le  \ex \left[\int_0^{\theta_m-} e^{-rs} \(ds +   \one 
 \cdot  dZ(s)\)\right].  \eeq

Also, recall that $  X(\theta_m) \le x_\la \le   X(\theta_m-)$. It follows from (\ref{claim2-ineq2}) that
\beq\label{claim2-ineq5} \begin{aligned}
 W&(X(\theta_m-),\al(\theta_m-)) - W(x_\la,\al(\theta_m-))\\ & = \sum^n_{i=1} D_{x_i} (x_\la + z(\hat X(\theta_m-)-x_\la),\al(\theta_m-)) 
    (X(\theta_m-)-x_\la) \cdot e_i 
\\ &=\la \sum^n_{i=1} D_{x_i} (x_\la + z(\hat X(\theta_m-)-x_\la),\al(\theta_m-))\Delta Z_i(\theta_m) \\ &  \ge -\la \sum^n_{i=1} \Delta  Z_i(\theta_m) = -\la  \one \cdot  \Delta Z(\theta_m) .
\end{aligned}\eeq
Combining (\ref{claim2-ineq4}) and (\ref{claim2-ineq5}), we have
\bed  \ex \left[\int_0^{\theta_m-} \!\!e^{-rs} \(ds +   \one 
\cdot  dZ(s)\)+ \la e^{-r\theta_m}\one\cdot \Delta Z(\theta_m)\right] \ge \ex \left[e^{-r\theta_m}W(x_\la,\al(\theta_m-)) \right]-W(x_m,\al_0). \eed
But $x_\la \in \partial B_\e(\xz)$, and consequently $W(x_\la,\al(\theta_m-))=0$. Also, it is immediate that $W(x_m,\al_0)=K_0(\abs{x_m-\xz}^2-\e^2) \le K_0((\frac{\e}{2})^2-\e^2) = -\frac{3}{4}K_0 \e^2$. Hence it follows that
\bed \ex \left[\int_0^{\theta_m-} \!\!e^{-rs} \(ds +   \one 
\cdot  dZ(s)\)+ \la e^{-r\theta_m}\one\cdot \Delta Z(\theta_m)\right] \ge  \frac{3}{4}  K_0\e^2 =:\kappa >0.\eed
This establishes (\ref{super-claim2}) and hence finishes the proof of the theorem.
  \end{proof}

\section{Viscosity Solution: Uniqueness}\label{sect-strong-comparison}

Our goal is to establish a strong comparison result for constrained viscosity solutions of \eqref{qvi-equiv-form}.
To this end, we need the following lemma.

\begin{lem}\label{lem-equivalent-comparison} Let $s(x) = x \cdot \one=\sum_{i=1}^n x_i $ and \beq\label{UV-defn}
\tilde u(x,\al):= e^{-\lambda s(x)} u(x,\al),\ \  \tilde v(x,\al):= e^{-\lambda s(x)} v(x,\al),\ \forall (x,\al) \in S\times \M,
\eeq where $\la>0$. 
Then
\begin{enumerate}
 \item[{\em (a)}] $u(x,\al)$ is viscosity subsolution of \eqref{qvi-equiv-form} if and only if $\tu(x,\al)$ is a viscosity subsolution of
\beq\label{subsoln-tu}\begin{aligned}
\min\big\{r\tu(x,\al)-H_\la(x,\al, \tu(x,\al), D\tu(x,\al), D^2 \tu(x,\al))- Q\tu(x,\cdot)(\al)   , \\  \min_{i=1,\dots,n}\set{e^{\lambda s(x)}[\la \tu(x,\al)+ D_{x_i}\tu(x,\al)]-f_i(x,\al)}\big\} = 0, \end{aligned}
\eeq
where for any $(x,\al, q, p, A)\in \rr^n \times \M   \times \rr \times \rr^n \times \mathcal S_n$,
 \bed \begin{aligned}H_\la(x,\al, q, p, A) = &\frac{1}{2}\tr(\sg\sg'(x,\al) A) +\frac{ \la}{2}  \(\one'\sg\sg'(x,\al) p + p'\sg\sg'(x,\al) \one \)
 \\ & \quad +  b(x,\al) \cdot p +  \la q b(x,\al) \cdot \one +      \frac{\la^2}{2} q \abs{\sg'(x,\al)\one}^2,\end{aligned} \eed
and
\bed Q \tu(x, \cdot)(\al)= \sum_{j=1}^m q_{\al j} \tu(x,j)= \sum_{j=1}^m q_{\al j}[ \tu(x,j)-\tu(x,\al)].\eed

\item[{\em (b)}] Similarly, $v(x,\al)$ is viscosity supersolution of \eqref{qvi-equiv-form} if and only if $\tv(x,\al)$ is a viscosity supersolution of
\eqref{subsoln-tu}.
\end{enumerate}
\end{lem}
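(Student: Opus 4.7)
The plan is to verify, by direct substitution, that the map $\vphi \mapsto \tilde\vphi := e^{-\la s(x)}\vphi$ is a bijection between admissible test functions at $\xalz$ for $u$ (resp.\ $v$) and admissible test functions at $\xalz$ for $\tu$ (resp.\ $\tv$), and that under this correspondence \eqref{qvi-equiv-form} is transformed algebraically into \eqref{subsoln-tu}. Since the prefactor $e^{-\la s(x)}$ is continuous and strictly positive, the two $\min$ inequalities will be simultaneously $\le 0$ (resp.\ $\ge 0$) once the pointwise algebra matches, yielding the equivalence.

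For part (a), I would first observe that $\tu^*(x,\al)=e^{-\la s(x)}u^*(x,\al)$, since $e^{-\la s(x)}$ is positive and continuous. Hence $\vphi\in C^2(S)$ touches $u^*$ from above at $\xalz$ iff $\tilde\vphi$ touches $\tu^*$ from above there. Differentiating at $\xalz$ gives
\begin{displaymath}
D\tilde\vphi = e^{-\la s}(D\vphi - \la\vphi\,\one),\qquad D^2\tilde\vphi = e^{-\la s}\bigl[D^2\vphi - \la(\one(D\vphi)'+(D\vphi)\one') + \la^2\vphi\,\one\one'\bigr].
\end{displaymath}
Substituting these into $H_\la(\xz,\al_0,\tilde\vphi,D\tilde\vphi,D^2\tilde\vphi)$, and using $\tr(\sg\sg'\one(D\vphi)')=(D\vphi)'\sg\sg'\one$ together with the symmetry of $\sg\sg'$, the terms proportional to $\la(D\vphi)'\sg\sg'\one$, to $\la^2\vphi|\sg'\one|^2$, and to $\la\vphi\,b\cdot\one$ all cancel pairwise, leaving
\begin{displaymath}
H_\la(\xz,\al_0,\tilde\vphi,D\tilde\vphi,D^2\tilde\vphi) = e^{-\la s(\xz)}\Bigl[\tfrac12\tr\bigl(\sg\sg'(\xalz)D^2\vphi(\xalz)\bigr) + b(\xalz)\cdot D\vphi(\xalz)\Bigr].
\end{displaymath}

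Combining this with $r\tu^*(\xalz)=e^{-\la s(\xz)}\,r u^*(\xalz)$ and $Q\tu^*(\xz,\cdot)(\al_0)=e^{-\la s(\xz)}\sum_j q_{\al_0 j}u^*(\xz,j)$ gives
\begin{displaymath}
r\tu^*(\xalz)-H_\la(\cdots)-Q\tu^*(\xz,\cdot)(\al_0) = e^{-\la s(\xz)}F_{\al_0}\bigl(\xz,{\mathbf u}^*(\xz),D\vphi(\xalz),D^2\vphi(\xalz)\bigr).
\end{displaymath}
For the gradient slot, the identities $\tilde\vphi(\xalz)=\tu^*(\xalz)=e^{-\la s(\xz)}u^*(\xalz)$ and $\vphi(\xalz)=u^*(\xalz)$ produce the clean cancellation
\begin{displaymath}
e^{\la s(\xz)}\bigl[\la\tu^*(\xalz)+D_{x_i}\tilde\vphi(\xalz)\bigr]-f_i(\xalz) = D_{x_i}\vphi(\xalz)-f_i(\xalz).
\end{displaymath}
Since $e^{-\la s(\xz)}>0$, these identities show that the subsolution inequality for $\tu$ at $\xalz$ with test function $\tilde\vphi$ is equivalent to the subsolution inequality for $u$ at $\xalz$ with test function $\vphi$. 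Part (b) follows from the same computation after replacing upper envelopes by lower envelopes, $u^*$ by $v_*$, and ``touching from above'' by ``touching from below''.

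The only step that requires genuine care is the algebraic verification culminating in the third display: $H_\la$ has been engineered precisely so that the first- and second-order corrections coming from differentiating the factor $e^{-\la s(x)}$ cancel, leaving only the bare operator $\tfrac12\tr(\sg\sg' D^2\vphi)+b\cdot D\vphi$. The regime-coupling term poses no new difficulty, since the prefactor $e^{-\la s(x)}$ depends only on $x$ (not on $\al$) and therefore factors out of the sum over $j$ uniformly in $\al$.
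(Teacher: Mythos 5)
Your proof is correct and takes essentially the same route as the paper: transform the test function via $\tilde\vphi=e^{-\la s}\vphi$, differentiate, substitute, and observe that $H_\la$ was built precisely so the extra first- and second-order terms cancel, after which the two $\min$-expressions differ only by the positive factor $e^{-\la s}$. The only differences are presentational: you differentiate $\tilde\vphi$ in terms of $\vphi$ (the paper goes the other way) and you spell out the harmless facts that $\tu^*=e^{-\la s}u^*$ and that the $\xi$-slot takes $\mathbf u^*$ rather than the test function, which the paper leaves implicit.
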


\begin{proof} We prove part (a) only; the proof of part (b) is similar. Suppose $u$ is viscosity subsolution
 of \eqref{qvi-equiv-form}. 
   Let $\tilde \vphi (\cdot, \al)\in C^2, \al \in \M$ and
 let $(\xalz)$ be a maximum point of $\tu-\tilde \vphi$ with $(\tu-\tilde \vphi)(\xalz)=0$.
 Put $$\vphi(x,\al):= e^{\la s(x)} \tvphi(x,\al).$$ Then
 it is easy to verify that
 $\vphi(\cdot, \al)\in C^2, \al\in \M$  and $(\xalz)$ is a maximum point of
 $u-\vphi$ with $(u-\vphi)(\xalz)=0$.
 Since $u$ is viscosity subsolution of \eqref{qvi-equiv-form}, we obtain
 \beq\label{u-sub-eq1}
r\vphi(\xalz) - \frac{1}{2} \tr(\sg\sg'(\xalz)D^2\vphi(\xalz)) - b(\xalz)\cdot D\vphi(\xalz)
 - \sum_{j=1}^m q_{\al_0 j}\vphi(x_0,j)   \le 0,\eeq
or \beq\label{u-sub-eq2}
\min_{i=1,\dots, n}\set{D_{x_i} \vphi(\xalz)-f_i(\xalz)}\le 0.\eeq

Since $\vphi(x,\al) = e^{\la s(x)} \tvphi(x,\al)$, we compute
\bed
D_{x_i} \vphi(x,\al)= e^{\la s(x)}[\la  \tvphi(x,\al)+ D_{x_i} \tvphi(x,\al)],
\eed
and
\bed \begin{aligned}\Dij \vphi(x,\al) = & e^{\la s(x)}
\big[    \la^2   \tvphi(x,\al)
   +  \la (  \Dj \tvphi(x,\al)  +     D_{x_i} \tvphi(x,\al)) + \Dij \tvphi(x,\al)\big].
\end{aligned}\eed
In other words,
\bed D\vphi(x,\al)= e^{\la s(x)}\left[ \la  \tvphi(x,\al) \one  +  D  \tvphi(x,\al)\right],\eed
and
\bed \begin{aligned}
D^2 \vphi(x,\al)=e^{\la s(x)} & \big[   \la^2 \tvphi(x,\al) \one \one'  + \la ( \one  D \tvphi(x,\al)'  +     D  \tvphi(x,\al) \one')+ D^2 \tvphi(x,\al) \big].
\end{aligned}\eed
Then substituting $D\vphi$ and $D^2 \vphi$ into \eqref{u-sub-eq1} leads to
\bed\begin{aligned}
0& \ge  r\tvphi(\xalz)  - \sum_{j=1}^m q_{\al_0 j} \tvphi(x_0,j) -     b(\xalz)\cdot(\la  \tvphi(x_0,\al_0) \one +  D  \tvphi(x_0,\al_0))  \\ & \ \ -
\frac{1}{2}\tr\(\sg\sg'(\xalz)\left[ \la^2 \tvphi(x_0,\al_0) \one \one'   + \la (\one  D \tvphi(x_0,\al_0)'  +     D  \tvphi(x_0,\al_0) \one')+ D^2 \tvphi(x_0,\al_0) \right]\)  ,
\end{aligned}\eed
which can be   rewritten as
\beq\label{u-sub-eq11} \begin{aligned}
 r  \tvphi(\xalz) - H_\la(\xz, \al_0, \tvphi(\xalz), D  \tvphi(\xalz), D^2  \tvphi(\xalz)) - Q\tvphi(\xz, \cdot) (\al_0)   \le 0.\end{aligned}\eeq
Similarly,
\eqref{u-sub-eq2} can be rewritten as
\beq\label{u-sub-eq21}
\min_{i=1,\dots,n} \set{e^{\la s(\xz)}[\la  \tvphi(\xalz)+ D_{x_i} \tvphi(\xalz)]-f_i(\xalz)} \le 0.
\eeq
Therefore in view of \eqref{u-sub-eq11} and \eqref{u-sub-eq21}, $\tu$ is a viscosity subsolution of \eqref{subsoln-tu}.

Conversely, let $\tu $ be a viscosity subsolution of \eqref{subsoln-tu}.
Recall $u(x,\al)= e^{\la s(x)}\tu(x,\al)$.
 Let $\vphi(\cdot,\al)\in C^2,\al\in\M $ and $(\xalz)$ be a maximum point of $u-\vphi$ with $(u-\vphi)(\xalz)=0$.
 Put $\tvphi(x,\al):=e^{-\la s(x)}\vphi(x,\al)$.
 Detailed calculations as above
  show  that $u$ is a viscosity subsolution of \eqref{qvi-equiv-form}.
\end{proof}

\begin{lem}\label{lem-Atar}
For every $\xi\in \bar S$, there exist $\eta=\eta(\xi)\in \rr^n$ and $a=a(\xi) >0$ such that 
\begin{displaymath}
B_{ta}(x+ t \eta) \subset S, \ \ \forall x\in \bar S\cap B_a(\xi), \ \forall t\in (0,1],
\end{displaymath} where $B_a(x)=\set{y\in \rr^n: |y-x| < a}$. 
\end{lem}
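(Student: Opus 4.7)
The plan is to exploit the explicit structure $S = \rr^n_+$, whose boundary is the union of the coordinate hyperplanes $\{x_i = 0\}$. In this setting the natural inward-pointing direction at every boundary point is the same, namely $\eta := \one = (1,\dots,1)'$. I therefore expect to take $\eta(\xi) := \one$ and $a(\xi) := 1/2$ uniformly in $\xi \in \bar S$; no actual dependence on $\xi$ is needed.

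To verify the inclusion, I would fix arbitrary $\xi \in \bar S$, $x \in \bar S \cap B_a(\xi)$, $t \in (0,1]$, and $y \in B_{ta}(x + t\eta)$, and show $y_i > 0$ for each $i = 1,\dots,n$. The key coordinate estimate is
\[
y_i \;\ge\; (x + t\eta)_i - |y - (x + t\eta)| \;>\; x_i + t - ta \;=\; x_i + t(1-a) \;\ge\; t(1-a) \;>\; 0,
\]
where I used $x_i \ge 0$ (because $x\in\bar S$), $|y-(x+t\eta)|< ta$, $t>0$, and $a = 1/2 < 1$. This gives $y \in S$, which is exactly the required inclusion.

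There is essentially no obstacle here: the cone $\rr^n_+$ is invariant under positive scaling and closed under coordinate-wise addition, so a single direction $\eta$ works simultaneously at every boundary point and the estimate reduces to a one-line computation. If one wanted to generalize to an arbitrary convex cone $S$ with nonempty interior, one would instead pick $\eta(\xi) \in S^o$ and choose $a(\xi)$ small enough that $B_{a}(\eta(\xi)) \subset S^o$; then the same scaling argument, combined with convexity ($t\eta + (1-t)\cdot 0 \in S$ for $t\in[0,1]$), yields the claim. But for the present paper only the orthant case is used, so the short proof above suffices.
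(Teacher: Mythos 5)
Your proof is correct, and it is genuinely different from what the paper does: the paper does not prove this lemma at all, it simply cites Atar and Budhiraja (2006), where the result is established for general convex cones (and in fact for more general domains). You instead exploit the concrete choice $S=\rr^n_+$ made at the start of Section~2 and give a self-contained two-line verification with the uniform data $\eta=\one$ and $a=1/2$. The coordinate estimate $y_i > x_i + t(1-a) \ge t/2 > 0$ for $y\in B_{ta}(x+t\one)$, $x\in\bar S$, $t\in(0,1]$ is exactly right, and you correctly observe that no dependence on $\xi$ is needed; the ball $B_a(\xi)$ only serves to keep $x$ in $\bar S$. What the paper's citation buys is generality (arbitrary convex cones, or domains satisfying an interior cone condition), which your closing remark already sketches how to recover by taking $\eta(\xi)\in S^o$ with $B_a(\eta(\xi))\subset S^o$ and invoking convexity. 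For the purposes of this paper, where $S$ is fixed to be the positive orthant, your elementary argument suffices and is arguably preferable to an external citation.
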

\begin{proof}
See \cite{MR2271486}.
\end{proof}
With Lemmas \ref{lem-equivalent-comparison}  and \ref{lem-Atar} at our hands, we are now ready to establish the strong comparison result for the constrained viscosity solution of \eqref{qvi-equiv-form}.

 \begin{thm}\label{thm-comparison} 
 Let $\mathbf u\in USC(\bar S\times \M; \rr^m) $ and $\mathbf v\in LSC(\bar S\times \M; \rr^m) $ be respectively   viscosity subsolution on $\bar S\times \M$ and supersolution in $S\times \M$ of
 \eqref{qvi-equiv-form} and satisfy 
\beq\label{uv-poly-growth}
\abs{u(x,\al)} + \abs{v(x,\al)} \le K (1+\abs{x}^p), \ \ \forall (x,\al) \in \bar S\times \M,
\eeq
where $K$ and $p$ are positive constants.  Assume  that  for some positive constant $\kappa_0$,  we have
\begin{equation}\label{ito-condition}  \abs{b(x,\al)-b(y,\al)} + \abs{\sigma(x,\al)-\sigma(y,\al)} \le \kappa_0 \abs{x-y},  \end{equation}
\beq\label{new-condition} 
 b(x,\al)'\one \le \kappa_0 \text{ and } \abs{\sigma(x,\al)'\one} \le \kappa_0, 
\eeq  for all  $ (x,\al)\in \bar S\times \M$.  
Then  we have $$u(x,\al) \le v(x,\al), \ \ \forall (x,\al) \in \bar  S \times \M.$$
 \end{thm}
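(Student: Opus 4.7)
The plan is to argue by contradiction after an exponential change of unknown, then apply the standard doubling-of-variables machinery adapted both to the state constraint and to the regime-switching coupling. First I would invoke Lemma~\ref{lem-equivalent-comparison} to pass to $\tu(x,\al):=e^{-\la s(x)}u(x,\al)$ and $\tv(x,\al):=e^{-\la s(x)}v(x,\al)$, where $\la>0$ will be fixed small at the end. The growth bound \eqref{uv-poly-growth} and the exponential weight force $\tu,\tv\to 0$ as $|x|\to\infty$, so it suffices to prove $\tu\le\tv$ on $\bar S\times\M$. Assuming for contradiction that $M:=\max_{\al\in\M}\sup_{x\in\bar S}(\tu-\tv)(x,\al)>0$, upper semicontinuity of $\tu-\tv$ together with decay at infinity guarantees that $M$ is attained at some $(\xi,\al_0)\in\bar S\times\M$, and finiteness of $\M$ lets me fix such an $\al_0$ globally, so that crucially $(\tu-\tv)(\xi,j)\le M$ for every $j\in\M$---this is the fact that will eventually neutralize the coupling.

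Next I would freeze $\al=\al_0$ and, using the direction $\eta(\xi)$ and radius $a(\xi)$ supplied by Lemma~\ref{lem-Atar}, set
\bed \Phi_{\e,\tha}(x,y):=\tu(x,\al_0)-\tv(y,\al_0)-\frac{|x-y+\tha\eta|^2}{\e}-\tha\bigl(\abs{x-\xi}^2+\abs{y-\xi}^2\bigr) \eed
on $\bar S\times\bar S$ for small $\e,\tha>0$. Routine arguments yield a maximizer $(\xe,\ye)\to(\xi,\xi)$ with $|\xe-\ye+\tha\eta|^2/\e\to 0$ and $\Phi_{\e,\tha}(\xe,\ye)\to M$; the Atar shift combined with Lemma~\ref{lem-Atar} forces $\ye\in S^o$, so the supersolution property is usable at $(\ye,\al_0)$ while $\xe\in\bar S$ is admissible for the subsolution. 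The Crandall--Ishii lemma then produces $(p_\e,X_\e)\in\bar J^{2,+}\tu(\xe,\al_0)$ and $(q_\e,Y_\e)\in\bar J^{2,-}\tv(\ye,\al_0)$ with $p_\e-q_\e\to 0$ and matrices obeying the usual Ishii matrix inequality.

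Since the subsolution inequality for \eqref{subsoln-tu} forces one of two branches to be active at $(\xe,\al_0)$, I would split into cases. If the gradient branch is active for some index $i$, subtracting the corresponding $\ge 0$ inequality of the supersolution at $(\ye,\al_0)$ and passing to the limit yields $e^{\la s(\xi)}\la M\le 0$, contradicting $M>0$. If instead the differential branch is active, subtracting the two inequalities gives
\bed r\bigl[\tu(\xe,\al_0)-\tv(\ye,\al_0)\bigr]\le\bigl[H_\la(\xe,\al_0,\cdot)-H_\la(\ye,\al_0,\cdot)\bigr]+\bigl[Q\tu(\xe,\cdot)(\al_0)-Q\tv(\ye,\cdot)(\al_0)\bigr]. \eed
The Lipschitz bound \eqref{ito-condition}, the uniform bounds \eqref{new-condition} on $b\cdot\one$ and $\sg'\one$, and the Ishii matrix inequality let me control the $H_\la$-difference in the limit by $\bigl(\la\kappa_0+\tfrac{\la^2}{2}\kappa_0^2\bigr)M$ plus terms that vanish as $\e,\tha\to 0$.

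The hardest part will be handling the coupling term, which obstructs the classical argument of \cite{CIL92} because $F_\al$ fails to be proper in $\xi$. The key observation that rescues the comparison is that at the maximizer $(\xi,\al_0)$,
\bed Q\tu(\xi,\cdot)(\al_0)-Q\tv(\xi,\cdot)(\al_0)=\sum_{j\ne\al_0}q_{\al_0 j}\bigl[(\tu-\tv)(\xi,j)-(\tu-\tv)(\xi,\al_0)\bigr]\le 0, \eed
since each bracket is $\le 0$ by the choice of $\al_0$ and $q_{\al_0 j}\ge 0$ for $j\ne\al_0$; thus the coupling contributes favorably (an effective ``properness at the maximum''). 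Combining everything and choosing $\la>0$ so small that $\la\kappa_0+\tfrac{\la^2}{2}\kappa_0^2<r$ (possible thanks to \eqref{new-condition}, which makes the constants uniform on the unbounded domain) then yields $rM<rM$, the desired contradiction. The exponential weight is thus doing double duty: it delivers attainment of the supremum on the unbounded domain, and its smallness absorbs the extra first- and zeroth-order terms created by the transformation.
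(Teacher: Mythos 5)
Your argument follows essentially the same route as the paper's proof: the exponential change of unknown via Lemma~\ref{lem-equivalent-comparison} to force decay at infinity and attainment of the supremum, doubling of variables with the Atar shift $\eta$ from Lemma~\ref{lem-Atar} so the supersolution is tested at an interior point, Ishii's lemma, a case split on the active branch of the QVI, and the crucial observation that the coupling term contributes non-positively at the maximizing index $\al_0$. The one place to be careful is that your displayed inequality is at the penalized maximizer $(\xe,\ye)$ rather than at $(\xi,\xi)$, so the coupling difference $Q\tu - Q\tv$ leaves a small positive $O(\tha^2)$ remainder (cf.\ the paper's estimate \eqref{Q-Q-to-0}) rather than being exactly $\le 0$; the contradiction only closes after sending $\e\downarrow 0$ and then $\tha\downarrow 0$, which your phrase ``plus terms that vanish as $\e,\tha\to 0$'' implicitly invokes but should be made explicit in a full write-up.
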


\begin{proof} Suppose on the contrary that \beq\label{eq-M-defn} M:= \max_{\al\in \M} \sup_{x\in \bar S} [u(x,\al)-v(x,\al)] >0.\eeq
We will derive a contradiction in the following. Define $\tu$ and $\tv$ as in \eqref{UV-defn}, where $\la>0$ is a constant to be determined later.
 Thanks to \eqref{uv-poly-growth}, $\tu$ and $\tv$ are uniformly bounded.
Moreover,
we have
\bed \lim_{|x|\to \infty, \, x\in \bar S} (\abs{\tu(x,\al)} + \abs{\tv(x,\al)})=0, \  \forall \al \in \M.\eed
 Therefore in view of \eqref{eq-M-defn} and the facts that $\M$ is finite and that
 $\tu-\tv$ is upper semicontinuous, there exist  some
   bounded set $O$ of $\bar S$ and $(\hat x, \ell )\in O \times \M$, such that
 \beq\label{eq-tildeM-defn} \tilde M:= \max_{\al\in \M}\sup_{x\in \bar S}[ \tu(x,\al)- \tv(x,\al) ]
  =  \max_{x\in O}[ \tu(x,\ell)- \tv(x,\ell) ]= \tu(\hat x,\ell)- \tv(\hat x, \ell)>0. \eeq

Let $\eta=\eta(\hat x) $ be as in Lemma  \ref{lem-Atar}. For any $\e, \delta \in (0,1)$ and   
 $(x,y) \in O \times O$, define
\beq\label{phi-e-defn}
\begin{aligned}
&\Phi(x,y)=\Phi_{\e,\delta, \lambda}(x,y) := \tu(x,\ell)- \tv(y,\ell) -\phi(x,y),\\
& \phi(x,y):=\abs{\frac{1}{\e}(y-x)-\delta \eta}^2 + \delta\abs{x-\hat x}^2.
\end{aligned}\eeq
 Note that $\Phi$ is USC and hence   achieves its maximum $M=M_{\e,\delta,\lambda}$
on the compact set $\bar O^2$
at $(\tilde x, \tilde y):=(x_{\e,\delta,\lambda},y_{\e,\delta,\lambda})$.
By virtue of Lemma \ref{lem-Atar}, $\hat x + \e \delta \eta \in S^o$.
Also, since \begin{displaymath}
\begin{aligned}
\Phi(\tilde x, \tilde y)& =\tu (\tilde x, \ell)-\tv(\tilde y,\ell)- \abs{\frac{1}{\e}(\tilde y-\tilde x)-\delta\eta}^2-\delta\abs{\tilde x-\hat x}^2\\
 & \ge \Phi (\hat x, \hat x+ \e\delta \eta)
  = \tu (\hat x, \ell) - \tv(\hat x+ \e\delta \eta,\ell),
\end{aligned}
\end{displaymath} we have 
\begin{displaymath}
\tu (\tilde x, \ell)-\tv(\tilde y,\ell)-  \tu (\hat x, \ell) + \tv(\hat x+ \e\delta \eta,\ell) \ge  \abs{\frac{1}{\e}(\tilde y-\tilde x)-\delta\eta}^2+\delta\abs{\tilde x-\hat x}^2.
\end{displaymath}
 Multiplying $\e^2$ on both sides of the above equation, we see that for each $\delta $ and $\lambda$, $\tilde x - \tilde y \to 0$ as $\e \to 0$.  Further, by virtue of   \eqref{eq-tildeM-defn}, we have 
 \begin{displaymath}
\limsup_{\e \to 0}  \abs{\frac{1}{\e}(\tilde y-\tilde x)-\delta\eta}^2+\delta\abs{\tilde x-\hat x}^2 \le 0;
\end{displaymath}
and therefore 
\begin{equation}\label{eq-tx-to-hx}
\tilde x \to \hat x, \text{ and } \frac{1}{\e}(\tilde y-\tilde x)\to\delta\eta, \ \text{ as }\e\to 0.
\end{equation}
In particular, it follows that 
 \begin{equation}
\label{eq-ty-to-hx}
\tilde y = \tilde x + \e\delta \eta + o(\e) = \hat x + \e\delta \eta + o(\e), \text{ as }\e\to 0,
\end{equation}
and hence $\tilde y \in S^o$ for $\e$ sufficiently small. 

     
The function $x\mapsto \tu(x,\ell) - \phi_1(x)$ achieves its maximum at $\tilde x$, where
 $$\phi_1(x)= \tv(\tilde y,\ell) + \abs{ \frac{1}{\e}(\tilde y-x) -\delta\eta}^2 + \delta \abs{x-\hat x}^2.$$
Moreover, we compute
$$D \phi_1(x)= -\frac{2}{\e}\(\frac{1}{\e}(\tilde y- x)-\delta\eta\)+2\delta(x-\hat x), \text{ and } D^2 \phi_1(x)= \frac{2}{\e^2}I+2\delta I. $$
Hence it follows   from \lemref{lem-equivalent-comparison}, the definition of viscosity subsolution, and Ishii's lemma  that for some $M\in \SS_n$, $(-D\phi_1(\tilde x), M) \in \bar {\mathcal P}^{2,+}\tu(\tilde x,\ell)$,  such that
\bea\ad \min\Big\{r \tu(\tilde x,\ell)-H_\lambda(\tilde x, \ell, \tu(\tilde x,\ell), D\phi_1(\tilde x),M) - Q\tu(\tilde x ,\cdot)(\ell),  
 \\ \aad \qquad 
 \ \ \min_{i=1,\dots, n}\set{e^{\la s(\tilde x )}[\la \tu(\tilde x ,\ell)+D \phi_1(\tilde x)\cdot e_i]-f_i(\tilde x ,\ell)}\Big\} \le 0.\eea
Thus either  
\beq\label{comparison-case1}
\min_{i=1,\dots, n}\set{e^{\la s(\tilde x )}[\la \tu(\tilde x ,\ell)+D \phi_1(\tilde x)\cdot e_i]-f_i(\tilde x ,\ell)} \le 0,
\eeq
or
\beq\label{comparison-case2}
r \tu(\tilde x ,\ell)-H_\lambda(\tilde x , \ell, \tu(\tilde x ,\ell),D\phi_1(\tilde x),M) - Q\tu(\tilde x ,\cdot)(\ell) \le 0.
\eeq
On the other hand, the function $y\mapsto \tv(y,\ell)- \phi_2(y)$ achieves its minimum
at $\tilde y$, where $$ \phi_2(y)= \tu(\tilde x, \ell)- \(\abs{\frac{1}{\e}(y-\tilde x)-\delta \eta}^2+\delta\abs{\tilde x-\hat x}^2\).$$
Direct calculations reveal that
$$D \phi_2(y)= -\frac{2}{\e}\(\frac{1}{\e}(y-\tilde x)-\delta \eta\), \text{ and } D^2 \phi_2(y_\e)= -\frac{2}{\e^2} I. $$
Hence the definition of supersolution and Ishii's lemma imply that for some $N \in \SS_n$, we have
$(D\phi_2(\tilde y), N) \in \bar {\mathcal P}^{2,-}\tv(\tilde y,\ell)$ and
\begin{equation}\label{eq-case2}\barray\ad \min\Big\{r \tv(\tilde y,\ell)-H_\lambda(\tilde y, \ell, \tv(\tilde y,\ell),D\phi_2(\tilde y),N) - Q\tv(\tilde y,\cdot)(\ell),  \\ \aad \qquad
 \min_{i=1,\dots, n}\set{e^{\la s(\tilde y)}[\la \tv(\tilde y,\ell)+ D\phi_2(\tilde y) \cdot e_i]-f_i(\tilde y,\ell)}\Big\} \ge 0.
 \earray
 \end{equation}

{\em Case 1.} Now suppose \eqref{comparison-case1} is true.
Recall $\tu(x,\al)= e^{-\la s(x)} u(x,\al)$ and $\tv(x,\al)= e^{-\la s(x)} v(x,\al)$.
Then we have from \eqref{comparison-case1} and \eqref{eq-case2} that 
\bea
0 \ad \ge \min_{i=1,\dots, n} \left\{e^{\la s(\tilde x)}[\la \tu(\tilde x,\ell)+D\phi_1(\tilde x)\cdot e_i]-f_i(\tilde x,\ell) 
  - e^{\la s(\tilde y)}[\la \tv(\tilde y,\ell)+D\phi_2(\tilde y)\cdot e_i]+f_i(\tilde y,\ell)\right\}\\
\ad = \min_{i=1,\dots, n} \set{\la(u(\tilde x,\ell)-v(\tilde x,\ell))+ (e^{\la s(\tilde x)}D\phi_1(\tilde x)-e^{\la s(\tilde y)} D\phi_2(\tilde y)) \cdot e_i- (f_i(\tilde x,\ell)-f_i(\tilde y,\ell)) }.
\eea
Hence it follows that
\begin{equation}\label{comparison-case1-eq1}
\begin{aligned}
\lefteqn{\la(u(\tilde x,\ell)-v(\tilde x,\ell)) } \\ & \le  \max_{i=1,\dots, n} \set{ (f_i(\tilde x,\ell)-f_i(\tilde y,\ell)) -(e^{\la s(\tilde x)}D\phi_1(\tilde x)-e^{\la s(\tilde y)} D\phi_2(\tilde y)) \cdot e_i }\\ 
 & =  \max_{i=1,\dots, n} \Bigg\{ (f_i(\tilde x,\ell)-f_i(\tilde y,\ell))   -e_i \cdot \Bigg[e^{\la s(\tilde x)} \left(-\frac{2}{\e}\(\frac{1}{\e}(\tilde y- \tilde x)-\delta\eta\)+2\delta(\tilde x-\hat x) \right)  
 \\ & \hfill \qquad \qquad \hspace{2.5in}
 + e^{\la s(\tilde y)}   \frac{2}{\e}\(\frac{1}{\e}(\tilde y-\tilde x)-\delta \eta\)\Bigg]\Bigg\}.
\end{aligned} \end{equation} 

Thanks to \eqref{eq-tx-to-hx},  \eqref{eq-ty-to-hx}, and the continuity of $f_i$, the
right-hand-side of \eqref{comparison-case1-eq1} converges to 0 as $\e
\downarrow 0$.
Note that for any $x\in O$, we have
\bea \tu(x,\ell)-\tv(x,\ell)\ad  = \Phi(x,x)  \le \Phi(\tilde x,\tilde y) \\ \ad = \tu(\tilde x,\ell)-\tv(\tilde y,\ell)- \phi(\tilde x,\tilde y)
\le \tu(\tilde x,\ell)-\tv(\tilde y,\ell).  \eea
In particular, taking $x= \hat x$ 
 leads to
  $\tilde u(\hat x, \ell)- \tilde v(\hat x, \ell)\le 0$, which gives a contradiction
  to \eqref{eq-tildeM-defn}. Hence, Case 1 is impossible.

{\em Case 2.} Now suppose \eqref{comparison-case2} is true. Then
it follows from \eqref{comparison-case2} and \eqref{eq-case2} that
\beq\label{comparison-case2-eq1}\begin{aligned}
\ad r(\tu(\tilde x,\ell) -\tv(\tilde y,\ell))  - \left[Q\tu(\tilde x,\cdot)(\ell)-Q\tv(\tilde y,\cdot)(\ell) \right]
\\ \aad \hfill  \ \ - \left [H_\lambda(\tilde x, \ell, \tu(\tilde x,\ell),D\phi_1(\tilde x),M)-H_\lambda(\tilde y, \ell, \tv(y_\e,\ell),D\phi_2(\tilde y),N)  \right] \le 0.
\end{aligned}\eeq
Using the definition of $H_\la$, 
\bea
\ad H_\lambda(\tilde x, \ell, \tu(\tilde x,\ell),D\phi_s(\tilde x),M)-H_\lambda(\tilde y, \ell, \tv(\tilde y,\ell),D\phi_2(\tilde y),N) \\[1.5ex]
\aad = \frac{1}{2}\( \tr(\sigma\sigma'(\tilde x,\ell)M)-\tr(\sigma\sigma'(\tilde y,\ell)N) \)  + \la\( \tu(\tilde x,\ell)b(\tilde x,\ell)   -  \tv(\tilde y,\ell)b(\tilde y,\ell)\)\cdot \one
 \\[1.5ex] \aad \  + \frac{\la}{2} \(\left[\one'\sigma\sigma'(\tilde x,\ell)D\phi_1(\tilde x) + D\phi_1(\tilde x)'\sigma\sigma'(\tilde x,\ell)\one \right]  - \left[\one'\sigma\sigma'(\tilde y,\ell)D\phi_2(\tilde y) + D\phi_2(\tilde y)'\sigma\sigma'(\tilde y,\ell)\one\right]  \)
\\[1.5ex] \aad \   +
b(\tilde x,\ell)\cdot D\phi_1(\tilde x) -  b(\tilde y,\ell)\cdot D\phi_2(\tilde y)
+ \frac{\la^2}{2} \( \tu(\tilde x,\ell)  \abs{\sg'(\tilde x,\ell)\one}^2
  -    \tv(\tilde y,\ell)  \abs{\sg'(\tilde y,\ell)\one}^2\).
\eea
Hence it follows that 
\beq\label{eq5-20}\begin{aligned}
\!\! \lefteqn{\left[r  - \lambda b(\tilde x,\ell)'\one -\frac{\lambda^2}{2} \abs{\sigma'(\tilde x,\ell)\one}^2\right](\tu(\tilde x,\ell)-\tv(\tilde y,\ell)) } & \\ 
 & \ \le  \left[Q\tu(\tilde x,\cdot)(\ell)-Q\tv(\tilde y,\cdot)(\ell)\right] + \lambda \tv(\tilde y,\ell)(b(\tilde x,\ell)-b(\tilde y,\ell))\cdot \one 
 \\& \ \ + \frac{1}{2}\left[ \tr(\sigma\sigma'(\tilde x,\ell)M)-\tr(\sigma\sigma'(\tilde y,\ell)N) \right]  \\ 
 &\ \  +\frac{\la}{2} \(\left[\one'\sigma\sigma'(\tilde x,\ell)D\phi_1(\tilde x) + D\phi_1(\tilde x)'\sigma\sigma'(\tilde x,\ell)\one \right]  - \left[\one'\sigma\sigma'(\tilde y,\ell)D\phi_2(\tilde y) + D\phi_2(\tilde y)'\sigma\sigma'(\tilde y,\ell)\one\right]  \)
\\  &\ \  +
b(\tilde x,\ell)\cdot D\phi_1(\tilde x) -  b(\tilde y,\ell)\cdot D\phi_2(\tilde y)
+ \frac{\la^2}{2} \tv(\tilde y,\ell) \(  \abs{\sg'(\tilde x,\ell)\one}^2
  -     \abs{\sg'(\tilde y,\ell)\one}^2\).
\end{aligned}\eeq 

Using \eqref{new-condition} and the fact that $r>0$, we choose $\la >0 $ sufficiently small so that 
\beq\label{u-v-pos-coeff}r  - \lambda b(\tilde x,\ell)'\one -\frac{\lambda^2}{2} \abs{\sigma'(\tilde x,\ell)\one}^2 > 0.\eeq 
Next we analyze the terms on the right-hand side of \eqref{eq5-20}. 
 Recall that 
\begin{displaymath}
\Phi(\tilde x,\tilde y)= \tu(\tilde x, \ell)-\tv(\tilde y,\ell)-\phi(\tilde x,\tilde y) \ge \Phi(\hat x,\hat x)= \tu(\hat x, \ell)-\tv(\hat x,\ell) - \delta^2 \abs{\eta}^2.
\end{displaymath} 
Note also $q_{\ell\ell} < 0$. Thus it follows that 
\begin{displaymath}
q_{\ell\ell}\left[\tu(\tilde x,\ell)-\tv(\tilde y,\ell)\right] \le q_{\ell\ell} \left[ \tu(\hat x, \ell)-\tv(\hat x,\ell) + \phi(\tilde x,\tilde y) - \delta^2 \abs{\eta}^2 \right].
\end{displaymath}This, together with \eqref{eq-tildeM-defn},  \eqref{eq-tx-to-hx},    \eqref{eq-ty-to-hx},  
and the fact that $\tu-\tv$  is USC, lead to 
\beq\label{Q-Q-to-0} \begin{aligned} \ad\lefteqn{ \limsup_{\e\downarrow 0}\left[Q\tu(\tilde x,\cdot)(\ell)-Q\tv(\tilde y,\cdot)(\ell)\right]} \\
 \aad = \limsup_{\e\downarrow 0} \sum_{j\not =\ell} q_{\ell j}(\tu(\tilde x,j)-\tv(\tilde y,j))+ q_{\ell\ell}(\tu(\tilde x,\ell)-\tv(\tilde y,\ell))\\
 \aad \le \sum_{j\not =\ell} q_{\ell j}(\tu(\hat x,j)-\tv(\hat x,j))+ q_{\ell\ell}(\tu(\hat x,\ell)-\tv(\hat x,\ell)) -\delta^2 q_{\ell\ell}\abs{\eta}^2 \\
\aad \le \sum_{j\not =\ell} q_{\ell j} (\tu(\hat x,\ell)-\tv(\hat x,\ell))  +  q_{\ell\ell}(\tu(\hat x,\ell)-\tv(\hat x,\ell))  -\delta^2 q_{\ell\ell}\abs{\eta}^2  =  - \delta^2 q_{\ell\ell}\abs{\eta}^2. \end{aligned}\eeq
By virtue of Ishii's lemma, 
\beq\label{eq-ishii}
\lim_{\e\downarrow 0}  \frac{1}{2}\( \tr(\sigma\sigma'(\xe,\ell)M)-\tr(\sigma\sigma'(\ye,\ell)N) \) =0. 
\eeq
Next, using \eqref{ito-condition} and \eqref{eq-tx-to-hx}, and noting that $\tv$ is bounded, we have 
\beq\label{bs-bs-to-0}   \lim_{\e\downarrow 0} \left[ \lambda \tv(\tilde y,\ell)(b(\tilde x,\ell)-b(\tilde y,\ell))\cdot \one + \frac{\la^2}{2} \tv(\tilde y,\ell) \(  \abs{\sg'(\tilde x,\ell)\one}^2
  -     \abs{\sg'(\tilde y,\ell)\one}^2\) \right]=0.\eeq 
  Similarly \eqref{ito-condition} and \eqref{eq-tx-to-hx} imply that 
  \begin{equation}\label{eq-sec5-25}
\begin{aligned}
 &  \left[\one'\sigma\sigma'(\tilde x,\ell)D\phi_1(\tilde x) + D\phi_1(\tilde x)'\sigma\sigma'(\tilde x,\ell)\one \right]  - \left[\one'\sigma\sigma'(\tilde y,\ell)D\phi_2(\tilde y) + D\phi_2(\tilde y)'\sigma\sigma'(\tilde y,\ell)\one\right]   \\ 
 & \ = -\frac{2}{\e}\one' \(\sigma\sigma'(\tilde x,\ell) -\sigma\sigma'(\tilde y,\ell)\)\cdot \(\frac{1}{\e}(\tilde y-\tilde x)-\delta \eta\) \\ & \quad  -\frac{2}{\e}\(\frac{1}{\e}(\tilde y-\tilde x)-\delta \eta\)' \(\sigma\sigma'(\tilde x,\ell) -\sigma\sigma'(\tilde y,\ell)\)\one \\ 
 & \quad + 2\delta \(\one' \sigma\sigma'(\tilde x,\ell)  (\tilde x -\hat x) + (\tilde x -\hat x)' \sigma\sigma'(\tilde x,\ell)  \one \)\\ 
 & \ \to 0, \ \text{ as } \e \to 0;
\end{aligned}
\end{equation}
and \begin{equation}
\label{eq-sec5-26}
\begin{aligned}
\lefteqn{b(\tilde x,\ell)\cdot D\phi_1(\tilde x) - b(\tilde y,\ell) D\phi_2(\tilde y)} \\ 
 & = -\frac{2}{\e} (b(\tilde x,\ell)-b(\tilde y,\ell)) \cdot\(\frac{1}{\e}(\tilde y-\tilde x)-\delta \eta\) + 2 \delta b(\tilde x,\ell)\cdot (\tilde x-\hat x) \\ 
 & \to 0, \ \text{ as }\e\to 0.
\end{aligned}
\end{equation}  
  
 Now letting $\e \downarrow 0$ and using \eqref{u-v-pos-coeff}--\eqref{eq-sec5-26} in \eqref{eq5-20}, we conclude that for sufficiently small $\lambda$,  $$ \limsup_{\e\to 0} \tu(\tilde x,\ell)-\tv(\tilde y,\ell)  \le  -\delta^2 q_{\ell\ell}\abs{\eta}^2.$$
 But $\delta>0$ can be arbitrarily small, 
  as  argued  in Case 1, it follows that 
 $$\tu(\hat x, \ell)-\tv(\hat x,\ell) \le \tu (\xe,\ell)-\tv(\ye,\ell) \to 0, \text{ as } \e \to 0 \text{ and } \delta \to 0, $$
 which again contradicts  \eqref{eq-tildeM-defn}. Therefore 
 for any $x\in S$ and $\al \in \M$,
 we have
$u(x,\al) \le v(x,\al)$, as desired. 
\end{proof}

\begin{rem}
Note that under condition \eqref{new-condition},  the value function is  bounded above by an affine function. In fact,  for any $Z\in \mathcal A_{x,\al}$ with $(x,\al)\in S\times \M$, we have 
$$d (e^{-rt} X(t)) = e^{-rt} \left[(b(X(t),\al(t)) - rX(t)) dt + \sigma(X(t),\al(t)) dW(t) - d Z(t)\right].$$
Thus $$e^{-rt} dZ(t) = e^{-rt}(b(X(t),\al(t)) - rX(t))dt + e^{-rt}\sigma(X(t),\al(t)) dW(t)  - d(e^{-rt} X(t)),$$
from which it follows that 
\begin{displaymath}
\int_0^\infty e^{-rt} \one \cdot dZ(t) \le  \int_0^\infty e^{-rt} \one \cdot \left[ (b(X(t),\al(t)) - rX(t)) dt + \sigma(X(t),\al(t)) dW(t) \right] + \one \cdot x.
\end{displaymath}
Taking  expectations on both sides and using \eqref{new-condition}, we have
$$\mathbb{E} \int_0^\infty e^{-rt} \one\cdot d Z(t)  \le 
\mathbb{E} \int_0^\infty e^{-rt} (\kappa_0  dt + \one\cdot \sigma(X(t),\al(t))  dW(t)) + \one \cdot x
= \frac{\kappa_0}{r} + \one \cdot x.
$$
In the above, $\mathbb{E} \int_0^\infty e^{-rt} \one \cdot  \sigma(X(t),\al(t))  dW(t) = 0$
since $\one\cdot \sigma$ is uniformly bounded. Hence, it follows that 
$$V(x,\alpha) = \sup_{Z\in \mathcal A_{x,\al}} \ex\int_0^\infty e^{-rt} f(X(t-),\al(t-)\cdot dZ(t) \le \|f\|_\infty   \(\frac{ \kappa_0}{r} + \one\cdot x\).$$ 
\end{rem}

Finally we summarize the main result of this paper from Theorems \ref{thm-viscosity} and \ref{thm-comparison}. 
\begin{thm}\label{thm-main-result} 
	Assume  \eqref{ito-condition}, \eqref{new-condition}, and that $\mathcal A_{x,\al}\not=\emptyset$ for every $(x,\al) \in S\times \M$. Then the value function $V$ defined in \eqref{value} is the unique constrained viscosity solution of the system of coupled quasi-variational inequalities \eqref{qvi-equiv-form} on $\bar S \times \M$.
\end{thm}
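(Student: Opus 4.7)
The plan is to obtain Theorem \ref{thm-main-result} as an immediate consequence of combining the existence result (Theorem \ref{thm-viscosity}) with the strong comparison principle (Theorem \ref{thm-comparison}). The only real work is checking that the hypotheses of these two theorems are in force and that the comparison argument can be iterated to produce uniqueness within the class of constrained viscosity solutions of polynomial growth.

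First, I would invoke Theorem \ref{thm-viscosity} directly: under the standing assumption $\mathcal A_{x,\al}\neq \emptyset$ for all $(x,\al)\in S\times \M$, the vector-valued map $\mathbf V(x)=(V(x,1),\dots,V(x,m))'$ is a constrained viscosity solution of \eqref{qvi-equiv-form} on $\bar S\times \M$. This establishes existence. To be able to appeal to Theorem \ref{thm-comparison}, I next need to verify that $V$ has polynomial growth in the sense of \eqref{uv-poly-growth}. This is exactly the content of the remark immediately preceding Theorem \ref{thm-main-result}: using \eqref{new-condition} and the integration-by-parts trick applied to $e^{-rt} X(t)$, one obtains the linear bound $V(x,\al)\le \|f\|_\infty(\kappa_0/r+\one\cdot x)$, which clearly satisfies \eqref{uv-poly-growth}. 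Since $V\ge 0$, the lower bound is trivial.

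For uniqueness, let $\mathbf u=(u(\cdot,1),\dots,u(\cdot,m))'$ be any other constrained viscosity solution of \eqref{qvi-equiv-form} on $\bar S\times \M$ satisfying the same polynomial growth condition \eqref{uv-poly-growth}. By the definition of constrained viscosity solution, $\mathbf u^*\in USC(\bar S\times\M;\rr^m)$ is a subsolution on $\bar S\times \M$ and $\mathbf u_*\in LSC(\bar S\times\M;\rr^m)$ is a supersolution on $S\times \M$; the analogous statement holds for $\mathbf V$. I then apply Theorem \ref{thm-comparison} twice: once with $(\mathbf u^*,\mathbf V_*)$ in the role of $(\mathbf u,\mathbf v)$, which yields $u^*\le V_*$ on $\bar S\times \M$, and once with $(\mathbf V^*,\mathbf u_*)$, yielding $V^*\le u_*$ on $\bar S\times \M$. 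Combined with the trivial inequalities $V_*\le V\le V^*$ and $u_*\le u\le u^*$, this forces
\[
V^*\le u_*\le u\le u^*\le V_*\le V\le V^*,
\]
so all of these functions coincide and in particular $u\equiv V$. Hence $V$ is the unique constrained viscosity solution of \eqref{qvi-equiv-form} in the class of functions of polynomial growth.

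The only subtlety I foresee is the consistency check that the hypotheses \eqref{ito-condition} and \eqref{new-condition} of Theorem \ref{thm-comparison} are precisely the assumptions carried in the statement of Theorem \ref{thm-main-result}, and that the polynomial growth requirement of \eqref{uv-poly-growth} is legitimately part of the ambient class; once these are lined up, the argument is a pure bookkeeping combination of the two preceding theorems with no further analytic content.
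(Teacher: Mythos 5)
Your proposal is correct and matches the paper's own (implicit, one-line) derivation: the paper simply states that Theorem~\ref{thm-main-result} follows from combining Theorems~\ref{thm-viscosity} and~\ref{thm-comparison}, and your argument spells out exactly that bookkeeping, including the growth bound from the preceding remark and the double application of the comparison principle to squeeze $u^*\le V_*\le V\le V^*\le u_*$.
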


\begin{rem}
At first look, condition \eqref{new-condition} seems rather restrictive. Simple models such as regime-switching geometric Brownian motion considered in Example \ref{exm3} are excluded. However, the following example indicates that in general, one can not remove \eqref{new-condition}; otherwise, uniqueness may not hold.  
\end{rem}

\begin{exm}\label{exm4}
Let's consider  a 1-dimensional   squared   Bessel process subject to control 
\begin{equation}
\label{BEDQ}
dX(t) = dt + 2 \sqrt{|X(t)|} dW(t)-dZ(t),
\end{equation} with reward functional $J(x,Z)= \ex_x\int_0^\infty e^{-t}dZ(t)$, where $x>0$.  It is well known (see, e.g., \cite{Revuz-Yor-99}) that the stochastic differential equation 
$$\xi(t) =x + t + 2\int_0^t \sqrt{|\xi(s)|}dW(s) $$ has a unique strong solution $\xi^x$, and for all $t\ge 0$, $\xi^x(t) = x+ |W(t)|^2\ge 0$ if $x\ge 0$. 
Moreover, using \cite{SongYZ-12}, it follows that $\pr_0\set{\tau=0}=1$, where $\tau:=\inf\set{t> 0: \xi^0(t) > 0}$. Hence it follows that $\mathcal A_x \not= \emptyset$ for all $x \in [0,\infty)$.

The corresponding QVI is 
\begin{equation}
\label{qvi-ex4}
\min\set{u(x)-u'(x)-2x u''(x), u'(x)-1}=0, \ \  x\in (0,\infty). 
\end{equation}
One can easily check that $v(x)=x+1$ is a constrained viscosity solution to  \eqref{qvi-ex4}  on $[0,\infty)$. In fact, 
for $x>0$, 
\begin{displaymath}
\min\set{v(x)-v'(x)-2x v''(x), v'(x)-1}= \min \set{x+1-1 -2x\cdot 0, 1 -1}=\min\set{x, 0} =0.
\end{displaymath}
Moreover,  the subsolution property holds at the point $x=0$ since for any $\phi \in C^2$ with $(v-\phi)(x) \le (v-\phi)(0)=0$, 
we have $ \phi'(0) \ge 1$ and hence 
\begin{displaymath}
\min\set{\phi(0)-\phi'(0)-2\cdot 0 \cdot \phi''(0), \phi'(0)-1} = \min\set{1-\phi'(0), \phi'(0)-1} \le 0. 
\end{displaymath}
Thus $v(x)=x+1$ is indeed a constrained viscosity solution to  \eqref{qvi-ex4}  on $[0,\infty)$. 

Next, we demonstrate that \eqref{qvi-ex4} has at least another constrained viscosity solution on $[0,\infty)$.  First we note that the function 
$\psi(x):= \sinh(\sqrt{2x})$ is increasing and solves the equation $u(x)-u'(x)-2x u''(x)=0$ in $(0,\infty)$. Further, straightforward calculations reveal that 
\begin{displaymath}
\begin{aligned}
& \psi'(x) = \frac{ \cosh(\sqrt{2x})}{\sqrt{2 x}}, \ \ \psi''(x)=\frac{1}{2\sqrt 2}\left[ -\frac{\cosh(\sqrt{2x})}{x^{3/2}}+ \frac{\sqrt 2\sinh(\sqrt{2x})}{x}\right].
\end{aligned}
\end{displaymath}
The equation $\psi''(x)=0$ or equivalently $\frac{\cosh(\sqrt{2x})}{\sinh(\sqrt{2x})}= \sqrt{2x}$ has a unique positive  root, denoted by $z$.  Now we claim that the function defined by 
\begin{displaymath}
u(x)=  \sinh(\sqrt{2x}) \frac{\sqrt{2z}}{\cosh(\sqrt{2z})} I_{(0, z]}(x) + \( x-z+ \sinh(\sqrt{2z}) \frac{\sqrt{2z}}{\cosh(\sqrt{2z})}\) I_{ (z, \infty)}(x) 
\end{displaymath} 
is the only constrained viscosity solution to \eqref{qvi-ex4} on $[0,\infty)$. In fact, one can directly verify that $u(x)$ is  a solution to   \eqref{qvi-ex4} for $x>0$. 
As in Example \ref{exm1}, it remains to verify the subsolution property at the point $x=0$. To this end, let $\phi \in C^2$ with 
$(u-\phi)(x) \le (u-\phi)(0) =0$ for $x\in [0,\infty) $ in a neighborhood of $0$.  Then $\phi(0) =0$ and for $x>0$, $\phi(x) \ge u(x) >0$. Thus we must have $\phi'(0) \ge 0$ and hence 
\begin{displaymath}
\min\set{\phi(0) -\phi'(0) -0\cdot \phi''(0), \phi'(0)-1} \le 0.
\end{displaymath} 
The desired conclusion follows. Note that $u$ also satisfies the polynomial growth condition \eqref{uv-poly-growth}.

In terms of the singular control problem \eqref{BEDQ}, it turns out that the value function $V(x)= v(x)= x+1$. In fact, from the state constraint, we have 
$Z(t) \le x+ W(t)^2$ for any $t\ge 0$. Therefore 
\begin{displaymath}
\begin{aligned}
J(x,Z) & = \ex_x\int_0^\infty e^{-t} dZ(t) = \ex_x \int_0^\infty \int_t^\infty e^{-s}ds dZ(t) = \ex_{x} \int_0^\infty \int_0^s dZ(t) e^{-s}ds \\ 
& \le \ex_x \int_0^\infty 
 e^{-s} (x+ W^2(s))ds = \int_0^\infty e^{-s}(x+s)ds = x+1.
\end{aligned}\end{displaymath}
Furthermore, it is easy to check that the control $Z^*(t) =x+ W^2(t)$ is optimal and $J(x,Z^*)=x+1$. Hence $V(x)=x+1$ as claimed.
\hfill $\Box$\end{exm}

We finish the section with a hierarchical PDE characterization for the boundary behavior of the solution to \eqref{qvi-equiv-form}.
Let $\ell \subset\{1,2,\ldots, n\}$ be an index subset.
For a vector
$v= (v_1, \ldots v_n) \in \mathbb{R}^n$,
we induce a smaller vector 
$v^\ell := (v_i)_{i\in \ell} \in \mathbb{R}^{|\ell|}$, i.e.
$v_i^\ell = v_{\ell_i}$ for all $i\in \{1,\ldots, n\}$.
Typically, this notation will be used for
$v = b, \sigma, f, \xi, X, Z$.

In a reverse direction, for a vector $v\in \mathbb{R}^{|\ell|}$,
we define a larger vector
$v^{-\ell} \in \mathbb{R}^n$ by
$$(v^{-\ell})_j = \left\{
  \begin{array}{ll}
    v_i, & \hbox{ if } j = \ell_i, \\
    0, & \hbox{ otherwise}.
  \end{array}
\right.$$
For a function
$g: \mathbb{R}^n \times \mathcal{M} \mapsto \mathbb{R}^n$,
we induce another function
$g^\ell : \mathbb{R}^{\ell} \times \mathcal{M} \mapsto \mathbb{R}^n$
such that
$g_i^\ell(x,\alpha) = g_{\ell_i}(x^{-\ell}, \alpha).$

The following assumption is imposed.
\begin{itemize}
\item [(H1)] $b_i(x,\alpha) = \sigma_i(x,\alpha) = 0$
  on
  $\{x\in \overline{\mathbb{R}_+^n} \ |\ x_i = 0\}.$
\end{itemize}
This basically means that, in the content of ecosystem modeling, once  the $i$th species
becomes extinct,
it will never revive, i.e. if $(\zeta_i)_t = 0$ for some $t$, then
$(\zeta_i)_s = 0$ for all $s\ge t$.

Thanks to (H1), \eqref{sde-switching} implies
following sub-dynamics:
\begin{equation}
  \label{eq:sdel}
  d  \zeta^\ell(t) =
  b^\ell( \zeta^\ell(t), \alpha(t)) dt +
  \sigma^\ell( \zeta^\ell(t), \alpha(t)) dW^\ell(t),
  \  \zeta^\ell(0) = x^\ell, \alpha(0) = \alpha.
\end{equation}
Therefore, we can look at
following subsystem.
Suppose the survived species are indexed
by $\ell$ with its remaning
amount $x\in \mathbb{R}^{|\ell|}$, then the associated
value function can be defined as
$$J^\ell(x,\al,Z)
:=\ex \int_0^{\infty} e^{-r s}
f^\ell( X^{x,\al,\ell}(s-),\al(s-)) \cdot dZ^\ell(s)
$$
and 
$$V^\ell(x,\alpha) =
\sup_{Z\in \mathcal{A}_{x,\alpha}} J^\ell(x,\alpha,Z).$$

For all $x\in \mathbb{R}^{|\ell|}$, $\xi\in \mathbb{R}^m$,
$p\in \mathbb{R}^{|\ell|}$, $A\in \mathcal{S}^{|\ell|}$, we define
a function
$$
\begin{array}{ll}
  G^\ell(x,\alpha, \xi, p, A) =& 
  \min\{\displaystyle
  r - \xi_\alpha -
  \frac 1 2 tr(\sigma^\ell (\sigma^\ell)'(x^\ell, \alpha) A)
  - b^\ell(x^\ell, \alpha) \cdot p
  - \sum_{j=1}^m q_{ij} \xi_j, \\ & \hspace{2in}
  \displaystyle
  \min_{i= 1,\ldots, |\ell|} \{p_i - f^\ell_i(x,\alpha)\}
  \}
\end{array}
$$

Then, one can apply induction to the previous results to show that,
${\bf V}(x) = (V(x,\alpha))_\alpha$ is the
unique solution of
\begin{equation}
  \label{eq:pdel}
  \left\{
  \begin{array}{ll}
    G(x,\alpha, {\bf V}(x), DV(x,\alpha), D^2V(x,\alpha))
    = 0, 
    & (x,\alpha) \in \mathbb{R}_+^n\times \mathcal{M}\\
    V(x^{-\ell}, \alpha) = V^\ell(x,\alpha), &
    (x,\alpha) \in \mathbb{R}_+^{n-1}\times \mathcal{M},
    |\ell| = n-1.
  \end{array}\right.
\end{equation}

\section{Conclusions and Remarks}\label{sect-remarks}
In this work, we considered a class of singular control  problems with state constraints and regime-switching. 
The controlled dynamic is given by a regime-switching diffusion confined in the unbounded domain $S=\rr^n_+$ and the objective is to maximize the total expected  discounted rewards from exerting the singular control. Using the weak dynamic programming principle, we showed that the value function is the unique constrained viscosity solution of the system of coupled nonlinear quasi-variational inequalities \eqref{qvi-equiv-form}.  

Throughout our analysis, the discount rate $r$ was fixed. It is interesting to ask how the solution, with appropriate scaling of the cost,  will behave as $r \to 0$; and how the limit, if it exists, relates to that of the average cost control problem. A number of other questions deserve further investigations. In particular,  it is worth studying the case when the random environment or the Markov chain $\al$ is unobservable.


\def\cprime{$'$}

\end{document}